\theoremstyle{plain}
\newtheorem{thm}{Theorem}[section]
\newtheorem*{thm*}{Theorem} 
\newtheorem{prop}[thm]{Proposition}
\newtheorem{lemma}[thm]{Lemma}
\theoremstyle{definition}
\newtheorem{defi}[thm]{Definition}
\theoremstyle{remark}
\newtheorem{remark}[thm]{Remark}
\newtheorem{ex}[thm]{Example}
\newcommand{\source}{\ensuremath{\mathbf{s}}}
\newcommand{\target}{\ensuremath{\mathbf{t}}}
\newcommand{\RR}{\ensuremath{\mathbb R}}
\newcommand{\bI}{\ensuremath{\bold{I}}}
\newcommand{\II}{\ensuremath{\mathbb I}}
\newcommand{\g}{\ensuremath{\mathfrak{g}}}
\newcommand{\h}{\ensuremath{\mathfrak{h}}}
\newcommand{\id}{\ensuremath{\mathrm{id}}}
\newcommand{\sC}{\mathscr{C}}
\newcommand{\sD}{\mathscr{D}}
\newcommand{\cV}{\mathcal{V}}
\newcommand{\Lie}{\mathcal{L}}
\renewcommand{\d}{\mathrm{d}}
 \definecolor{darkgreen}{rgb}{0.0, 0.17, 0.1}
\definecolor{forest}{rgb}{0,0.5,0}
\newcommand{\ddt}{\frac{\partial}{\partial t}}
\newcommand{\G}{\mathcal{G}}
\newcommand{\tto}{\rightrightarrows}
\def\Lie{{\bf\mathcal{L}\mkern-1,5mu\textit{\textbf{ie}}}}
\begin{document}

\title[Lie groupoids and their natural transformations]{Lie groupoids and their natural transformations}
  
\author{Olivier Brahic}
\email{olivier@ufpr.br}  
\address{Departamento de Matem\'atica - UFPR
Centro Polit\'ecnico - Jardim das Am\'ericas
CP 19081
CEP:81531-980,  Curitiba - Paran\'a, Brazil}

\author{Dion Pasievitch }
\email{dion.rss@gmail.com}
\address{
Universidade Estadual do Paraná - UNESPAR, Campus União da Vitória. 
R. Cel. Amazonas - Centro - CEP:84600-000, União da Vitória - Paran\'a, Brazil}

\date{\today}
\subjclass[2010]{}


\maketitle
\begin{abstract}
 We discuss natural transformations in the context of Lie groupoids, and their infinitesimal counterpart. Our main result is an integration procedure that provides smooth natural transformations between Lie groupoid morphisms. 
\end{abstract}

\setcounter{tocdepth}{1} 
\tableofcontents

\section*{Introduction}

A Lie groupoid is a groupoid together with a compatible smooth structure. The precise definition is obtained by considering groupoids that are \emph{internal} to the category of manifolds: this means that one imposes on both the space of objects and of morphisms of a Lie groupoid to be smooth manifolds, and on all structure maps (source, target, multiplication...) to be smooth. Similarly, a Lie groupoid morphism is just a functor that is smooth in the sense that all its underlying maps are required to be smooth.

From there on, the notion of natural transformation between Lie groupoid morphisms should be clear: it is a natural transformation that is smooth as a map from the space of objects to the space of morphisms. In fact, despite the fact that Lie groupoids have recently been the subject of intensive studies, the notion of natural transformation in this context is hardly addressed, which partly motivated this work.

To any Lie groupoid, there is an associated infinitesimal structure called Lie algebroid. The procedure of associating to a Lie groupoid $\G$ its Lie algebroid $A=\Lie(\G)$ is very similar to that of associating to a Lie group its Lie algebra, and should be thought of as a differentiation process. In particular, it is functorial, namely to any Lie groupoid morphism $F:\G_1\to \G_2$ one can associate a Lie algebroid morphism $\Lie(F):\Lie(\G_1)\to \Lie(\G_2)$.

The inverse procedure, that of associating to a Lie algebroid $A$ a Lie groupoid $\G(A)$, is called \emph{integration} and is more subtle. Indeed, not any Lie algebroid comes from a Lie groupoid.  There are obstructions to the integrability of a Lie algebroid, that were exhibited by Crainic and Fernandes
in \cite{CrFe2}. Note that despite integrability issues, the integration procedure always produces a topological groupoid and, from this point of view, it is functorial. 

Regarding natural transformations,  there are two basic questions that naturally arise in the context of Lie groupoids:
\begin{enumerate}[a)]
\item What is the infinitesimal counterpart of a (smooth) natural transformation for Lie groupoids, if any ?
\item What would be the corresponding integration procedure ?
\end{enumerate}
The aim of this work is to address both questions, to which, surprisingly, we could not find any explicit answer in the literature. In fact, we have no doubt most results in this work are known to some experts  (in one form or another, see \emph{e.g.} \cite{Sul, B, BojKotovStrobl, StashefShreiber, RBrown}) especially for people familiar with classifying spaces.  Nonetheless, after discussing with colleagues, we received enough encouragements to convince ourselves that the gap was worth filling.

 Our main result is the aforementioned integration procedure, that states the following.

\begin{thm*}
Let $A_M$ and $A_N$ be Lie algebroids over smooth manifolds $M$ and $N$ respectively, together with a smooth family $\Phi_t:A_M\to A_N$ of Lie algebroid morphisms parametrized by $t\in [0,1]$. 

We denote by $\phi_t: M\to N$ the  underlying family of smooth maps, and by 
$$\Phi_t^*:\bigl(\Omega^\bullet(A_N), d_{A_N}\bigr)\longrightarrow  \bigl(\Omega^\bullet(A_M), d_{A_M}\bigr)$$ the smooth family of morphisms of cochain complexes induced by $\Phi_t$.

Assume that there exists a smooth family $\theta_t\in \Gamma(\phi_t^*A_N)$ of sections of $A_N$ supported by $\phi_t$, that satisfies the following condition:
\begin{equation}
 \ddt\Phi_t^*= d_{A_M}\circ i_{\theta_t}-i_{\theta_t}\circ d_{A_N},
\end{equation}
where $i_{\theta_t}:\Omega^\bullet(A_N)\to \Omega^{\bullet-1}(A_M)$ is the 
 obvious family of maps obtained  by contraction with $\theta_t$.
 
Then, the following assertions hold:
\begin{enumerate}[i)]
\item The morphisms $\Phi_0^*$ and $\Phi_1^*$ are chain homotopic.
\item Whenever $A_M$ and $A_N$ are integrable Lie algebroids, the assignment:
$$\eta(m):=[\theta(m)dt]_{A_M}$$
defines a smooth natural transformation $\eta: F_0\Rightarrow F_1$, where
$F_0,F_1: \G(A_M)\to \G(A_N)$
denote the Lie groupoid morphisms integrating $\Phi_0$ and $\Phi_1$ respectively.

Here, we denoted by $[\theta(m)dt]_{A_N}$ the $A_N$-homotopy class of the $A_N$-path $\theta(m)dt:TI\to A_N$.
\end{enumerate}
\end{thm*}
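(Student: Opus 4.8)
The plan is to treat the two assertions by quite different means: (i) follows by directly integrating the displayed identity in $t$, while for (ii) the idea is to recognize the pair $(\Phi_t,\theta_t)$ as the infinitesimal data of a single Lie algebroid morphism out of $TI\times A_M$, whose integration is, tautologically, the sought natural transformation. Concretely, for (i) I would set
$$h:=\int_0^1 i_{\theta_t}\,dt\colon \Omega^\bullet(A_N)\longrightarrow \Omega^{\bullet-1}(A_M),$$
and integrate the hypothesis over $t\in[0,1]$, so that the fundamental theorem of calculus gives
$$\Phi_1^*-\Phi_0^*=\int_0^1\partial_t\Phi_t^*\,dt=d_{A_M}\circ h-h\circ d_{A_N}.$$
Thus $h$ is the desired degree $-1$ chain homotopy between $\Phi_0^*$ and $\Phi_1^*$. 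The only points needing a word of justification are that one may integrate the smooth family of (degreewise finite-rank) contractions termwise, and that $h$ lowers degree by one, both immediate from the explicit form of $i_{\theta_t}$.

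For (ii) the key observation is that, \emph{given} that each $\Phi_t$ is a morphism, the displayed identity is exactly the remaining condition needed for the combined data to assemble into a single Lie algebroid morphism $H\colon TI\times A_M\to A_N$ over the map $(t,m)\mapsto\phi_t(m)$, with $H(\partial_t)=\theta_t$ and $H|_{A_M}=\Phi_t$ at time $t$. Indeed, writing a morphism as a chain map $H^*$ between Chevalley--Eilenberg complexes and decomposing $H^*\omega$ into its $dt$-free and $dt$-components $\Phi_t^*\omega$ and $i_{\theta_t}\omega$, the chain-map condition $H^*d_{A_N}=(dt\wedge\partial_t+d_{A_M})H^*$ splits into the statement that each $\Phi_t$ is a morphism (the ``$A_M$-directions'') together with precisely the displayed Cartan-type identity governing the ``$\partial_t$-direction'' (with the signs dictated by the conventions chosen for $dt$ and for $i_{\theta_t}$). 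In particular, its degree-zero part forces the anchor compatibility $\rho_{A_N}(\theta_t(m))=\partial_t\phi_t(m)$ up to the same sign, so that $t\mapsto\theta_t(m)$ is a genuine $A_N$-path over the base path $t\mapsto\phi_t(m)$; this makes $\eta(m)=[\theta(m)\,dt]_{A_N}$ well defined, with source $\phi_0(m)=F_0(m)$ and target $\phi_1(m)=F_1(m)$, as a component of a transformation $F_0\Rightarrow F_1$ must be.

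Granting integrability of $A_M$ and $A_N$ (hence of $TI\times A_M$, whose source-simply-connected integration is $(I\times I)\times\G(A_M)$), I would invoke functoriality of integration to produce a smooth Lie groupoid morphism $\tilde H\colon (I\times I)\times\G(A_M)\to\G(A_N)$ restricting to $F_0$ over $\{0\}$ and to $F_1$ over $\{1\}$, and satisfying $\tilde H\bigl((0,1),1_m\bigr)=[\theta(m)\,dt]_{A_N}=\eta(m)$. Naturality is then pure functoriality: in the product groupoid the arrow $\bigl((0,1),g\bigr)\colon(0,m_0)\to(1,m_1)$ factors in the two evident ways,
$$\bigl((0,1),1_{m_1}\bigr)\circ\bigl((0,0),g\bigr)=\bigl((0,1),g\bigr)=\bigl((1,1),g\bigr)\circ\bigl((0,1),1_{m_0}\bigr),$$
and applying the functor $\tilde H$, together with $\tilde H\bigl((0,0),g\bigr)=F_0(g)$ and $\tilde H\bigl((1,1),g\bigr)=F_1(g)$, yields exactly the naturality square $\eta(m_1)\circ F_0(g)=F_1(g)\circ\eta(m_0)$. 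Smoothness of $\eta$ is inherited from smoothness of $\tilde H$ and of $m\mapsto\bigl((0,1),1_m\bigr)$.

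The main obstacle is the integration step: one must verify that the morphism $H$ out of the boundary-carrying algebroid $TI\times A_M$ integrates to a \emph{smooth} groupoid morphism, and identify $\G(TI\times A_M)$ with $(I\times I)\times\G(A_M)$, together with the smooth dependence on $m$ of the class $[\theta(m)\,dt]_{A_N}$. To avoid leaning on the full functoriality of integration, the same conclusion can be reached by hand: represent an arrow $g$ of $\G(A_M)$ by an $A_M$-path $a$ with base path $\gamma$, and build the explicit $A_N$-homotopy filling the base square $(t,s)\mapsto\phi_t(\gamma(s))$ that connects the concatenation of $\Phi_1\circ a$ after $\theta(m_0)\,dt$ to that of $\theta(m_1)\,dt$ after $\Phi_0\circ a$. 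This amounts to integrating $H$ only over $2$-dimensional simplices, which can be carried out directly from the Cartan identity and simultaneously renders the smoothness of $\eta$ transparent.
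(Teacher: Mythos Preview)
Your proposal is correct and follows essentially the same strategy as the paper: for (i) you integrate the Cartan-type identity over $t$ to produce the chain homotopy $h=\int_0^1 i_{\theta_t}\,dt$, and for (ii) you assemble $(\Phi_t,\theta_t)$ into a single Lie algebroid morphism $A_M\times TI\to A_N$ (the paper's Proposition~\ref{prop:psiliealgebroidmorphism}), integrate it functorially to a groupoid morphism $\G(A_M)\times\bI\to\G(A_N)$, and read off the natural transformation exactly as in the paper's Proposition~\ref{prop:naturaltandhomotopiessmooth}. Your self-contained verification of naturality via the two factorizations of $((0,1),g)$ in the product groupoid is precisely the content of the proof of that proposition, and your closing ``by hand'' alternative (filling the square by an explicit $A_N$-homotopy) is a nice supplement but not needed once functoriality of integration is granted.
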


Here, note that the item $i)$ is essentially due to Balcerzak \cite{B}. However, as we shall see, it is  meaningful to have both natural transformations and chain homotopies in a same statement, as these correspond to a same geometric entity. 

The paper is organized as follows:
\begin{itemize}
\item  We first briefly review, in \S \ref{sec:generalities}  basic facts about Lie groupoids and  Lie algebroids, their morphisms and their integration, establishing a few notations on the way. Then we review in \S \ref{sec:naturaltandhomotopies} the notion of natural transformation, explain how they can be seen as \emph{discrete} homotopies, and how this translates in the smooth setting. Although elementary, this part is crucial in order to answer to the question $a)$.

\item  As it turns out, there is no real infinitesimal counterpart to a natural transformation, a fact which we  clarify in the Remark \ref{rem:infinitesimalnaturaltransf}. This forces us to rather consider what we call \emph{natural homotopies}, and can be seen as smooth families of natural transformations.
 
\item   In the subsequent Section \S\ref{sec:infinitesimal}, we look at the infinitesimal counterpart of a natural homotopy, and how this expresses in terms of DGAs. The section \S \ref{sec:integration} is then devoted to the main Theorem, namely an integration procedure leading to a smooth natural transformation. Finally, we briefly look at examples, first  in the context of fundamental groupoids, and then look at deformations retract for Lie algebroids.
\end{itemize} 

Regarding the existing literature, let us emphasize the following interesting points. Before the notion of Lie algebroid even emerged, the relevance of the notion of homotopy between maps of DGAs was already pointed out by Sullivan in \cite{Sul}. 

In the context of Lie algebroids, homotopies between Lie algebroid morphisms appear in the earlier work of Kubarski \cite{Kub} on Chern-Weil homomorphisms (see also \cite{BKW, B} and \cite{CrFe5} where homotopy invariance is fundamental)  then studied later on, in connection with Poisson sigma models, by Kotov-Strobl  \cite{BojKotovStrobl}. Another closely related notion include, to a certain extent, the work of Abad and Crainic on representations up to homotopy \cite{CrAb} (see also \cite{BO}). However, the interpretation of such homotopies in terms of natural transformation never clearly appears in any of these works.  

A generalization to $L_\infty$-algebras with a strong categorical point of view can be found in \cite{StashefShreiber}. The application of these results to Lie algebroids however, is not immediate, unless considering a Lie algebroid as an infinite dimensional Lie algebra which turns out quite restrictive, as we point out in the Remark \ref{rem:infinitesimalnaturaltransf}). Note however that we laid out our construction in such a way that it should be easy to carry through in the graded setting (namely for $NQ$-manifolds).

Finally, we would like to point out the  work of H. Colman \cite{HC1, HC2} that generalizes the notion of homotopy between Lie groupoid morphisms to generalized morphisms. In particular, although the notion of Morita equivalence for Lie algebroid is a subtle one, an infinitesimal counterpart of the construction of H. Colman seems like an interesting venue for future work.

\section{Generalities on Lie groupoids and Lie algebroids}\label{sec:generalities}

\subsection{Lie Groupoids}
 A Lie groupoid is, roughly speaking, a groupoid object in the category of smooth manifolds. Indeed, we need to add some extra requirement to make up for the fact the category of smooth manifolds does not have pullbacks. More precisely, a \emph{Lie groupoid} over a manifold $M$ is a small groupoid $\G_M\rightrightarrows M$ such that both $\G_M$ and $M$ are smooth manifolds, all the structure maps are smooth and the source map (or the target) is a surjective submersion. The last requirement is to  ensure the space of composable arrows $\mathcal{G}_M\times_M \mathcal{G}_M$ is a smooth manifold, so that it makes sense to require the multiplication map to be smooth. 

A \emph{Lie groupoid morphism} from a Lie groupoid $\G_M\rightrightarrows M$ to another $\G_N\rightrightarrows N$ is simply a functor $F:\mathcal{G}_M\longrightarrow \mathcal{G}_N$ which is a smooth map both at the level of objects and of morphisms. The corresponding category will be denoted by $\mathsf{LieGpd}$. General references for Lie groupoids are \cite{MK2} and \cite{MMrcunbook}. 

\subsection{Lie algebroids}

The infinitesimal counterpart of Lie groupoids are Lie algebroids. A \emph{Lie algebroid} over a manifold $M$ is a vector bundle $A\longrightarrow M$ together with a Lie bracket $[\cdot, \cdot]_{A}$ defined on its space of sections $\Gamma(A)$ and a vector bundle morphism $\sharp_{A}:A\longrightarrow TM$ covering the identity $\mathrm{id}_M$, called the \emph{anchor}, and satisfying the \emph{Leibniz rule}
\begin{align*}
[a, f\cdot b]_{A}=f\cdot [a, b]_A+\mathcal{L}_{\sharp_{A} (a)}(f)\cdot b
\end{align*}
for every $a, b\in \Gamma(A)$ and $f\in C^\infty(M)$, where $\mathcal{L}_{\sharp_{A} (a)}$ stands for the Lie derivative along the vector field $\sharp_{A}(a)$.

Any lie algebroid $A\longrightarrow M$ gives rise to a cochain complex  $(\Omega^\bullet(A), d_{A})$ defined in degree $k$ by:
\begin{align*}
\Omega^k(A):=\Gamma(\wedge^k A^*)
\end{align*} 
and whose differential is the degree $1$ linear map $d_{A}:\Omega^\bullet(A)\longrightarrow \Omega^{\bullet+1}(A)$ given by
\begin{align*}
(d_{A} \varepsilon)(a_0{}_\wedge \ldots{}_\wedge a_k)&:=\sum_{j=0}^k (-1)^j \mathcal{L}_{\sharp_{A} a_j}(\varepsilon(a_0{}_\wedge \ldots{}_\wedge\widehat{a_j}{}_\wedge \ldots{}_\wedge a_k))\\
&+ \sum_{0\leq i<j\leq k} (-1)^{i+j} \varepsilon([a_i, a_j]_{A}{}_\wedge a_0{}_\wedge \ldots{}_\wedge\widehat{a_i}{}_\wedge \ldots \widehat{a_j}{}_\wedge \ldots, a_k),
\end{align*}

for every $\varepsilon \in \Omega^k(A_M)$ and $a_1, \ldots, a_k\in \Gamma(A_M)$. It is a well know fact that this operator squares to zero,  $d_{A_M}^2=0$, and that it is a derivation of the wedge product $\wedge$, that is:
\begin{align*}
d_{A_M}(\alpha\wedge \beta)=d_{A_M}\alpha\wedge \beta+(-1)^{|\alpha|} \alpha\wedge d_{A_M} \beta,
\end{align*}
for every $\alpha, \beta\in \Omega^\bullet(A)$ homogeneous. Therefore, $(\Omega^\bullet(A_M), d_{A_M}, \wedge)$ is a differential graded algebra (DGA for short).

 A \emph{Lie algebroid morphism} from a Lie algebroid $A_M\longrightarrow M$ to another $A_N\longrightarrow N$ is a vector bundle morphism $\Phi:A_M\longrightarrow A_N$ compatible with the anchors and with the brackets $[\cdot, \cdot]_{A_M}$ and $[\cdot, \cdot]_{A_N}$. We refer the reader to \cite{MK2} for the explicit compatibility conditions. Along this work, we shall, however, use a characterization of Lie algebroid morphisms in terms of morphisms of DGAs as we point out in the next section.

\subsection{Lie algebroids as Differential Graded Algebras}\label{sec:LiealgasDGAs}
Recall from \cite{Vain} the fundamental result that a Lie algebroid structure on a vector bundle $A\to M$ is equivalent to a degree $1$ derivation of the graded algebra $(\Omega(A),{}\wedge)$. More precisely, it is equivalent to an operator $d_A:\Omega(A)\to \Omega(A)$ that makes $(\Omega(A),\wedge,d_{A})$ into a  differential graded algebra (DGA, for short) which amounts to requiring that:
$${d}_{A}(\alpha_\wedge\beta)=({d}_{A}\alpha)_\wedge\beta+(-1)^{|\alpha|}\alpha_\wedge({d}_{A}\beta).$$

Further, given Lie algebroids $A_M\to M$ and $A_N\to N$,  a vector bundle map $\Phi:A_M\to A_N$ is a Lie algebroid morphism if and only if the naturally induced map:
$$\Phi^*:\Omega(A_N)\to \Omega(A_M)$$

defined as
\begin{align*}
(\Phi^* \varepsilon)_p(a_1, \ldots, a_k):=\varepsilon_{\phi(p)}(\Phi(a_1), \ldots, \Phi(a_k)),
\end{align*}
for every $\varepsilon\in \Omega^k(A_N)$, $p\in M$ and $a_1, \ldots, a_k\in (A_M)_p$, is a morphism of DGAs, namely \emph{iff} the following conditions hold:
\begin{align*}
\Phi^*(\alpha_\wedge\beta)&=\Phi^*(\alpha)_\wedge \Phi^*(\beta), \\
\Phi^*\circ d_{A_N}&=d_{A_M}\circ \Phi^*.
\end{align*}
Note that any morphism of such DGAs is of the form $\Phi^*$ for some vector bundle map $\Phi:A_M\to A_N$. 

In other words the structure of a  DGA on $\Omega(A)$ entirely captures the  Lie algebroid structure on $A$. In fact, Lie algebroids and their morphisms are algebraically better behaved when described in the above way\footnote{In particular, the notion of morphism between Lie algebroids over different bases is much easier to deal with when working with $\Phi^*$ rather than $\Phi$, see \cite[Prop 2]{BojKotovStrobl}.}

In the sequel, the category of Lie algebroids will be denoted by $\mathsf{LieAlgd}$.

\subsection{Derivations of a Lie algebroid}
The following definitions were borrowed from \cite{BojKotovStrobl}.

\begin{defi}
Given a smooth  map $\phi:M\to N$, and a vector bundle $A_N$ over $N$, a  \emph{section of $A_N$ supported by $\phi$} is a section $\theta$ of the pullback bundle $\theta\in \Gamma(\phi^*A_N)$.
\end{defi}
In particular, if $A_M=TM$ and $A_N=TN$ are tangent bundles, then  $\theta$ is just a \emph{vector field supported by} $\phi$ that is, essentially, a vector tangent at $\phi$ in the space of smooth maps from $M$ to $N$.

\begin{defi}\label{def:phiderivation}
Let  $\Phi:A_M\to A_N$ be a vector bundle map covering $\phi:M\to N$,  and denote by $\Phi^*:\Omega(A_N)\to \Omega(A_M)$ the induced map on forms. 

A \emph{$\Phi^*$-derivation of degree $k$} is a degree $k$ aplication $$\cV: \Omega^\bullet(A_N)\to \Omega^{\bullet+k}(A_M)$$
 such that:
\begin{align}
\cV(\alpha+\beta)&=\cV(\alpha)+\cV(\beta),\\
\label{eq:phiderivationproperty}
\cV(\alpha_\wedge\beta)
&=\cV(\alpha)_\wedge\Phi^*(\beta)+(-1)^{k\cdot |\alpha|}\Phi^* (\alpha)_\wedge\cV(\beta).
\end{align}
In the case $k=0$, $A_M=A_N$, and $\Phi=\id_{A_N}$, we say that $\cV$ is a derivation of $\Omega(A_M)$.
\end{defi}

\begin{prop}\label{prop:derivationandtheta}
Under the assumptions and notations of the Definition \ref{def:phiderivation}, there is a one-to-one correspondence between:
\begin{itemize}
\item $\Phi^*\!$-derivations of degree $-1$,
\item sections $\theta\in\Gamma(\phi^*A_N)$ supported by $\phi$,
\end{itemize} 
which is given as follows: to any section $\theta\in\Gamma(\phi^*A_N)$ supported by $\phi$,  one associates the $\Phi^ *\!$-derivation $\cV:=i_{\theta}^{\Phi}$ defined by:
\begin{equation}\label{eq:leibnizopreatordef}
\bigl\langle i_{\theta}^{\Phi}\beta\, ,\, a_1{}_\wedge \dots{}_\wedge a_{k-1} \bigr\rangle
:=
\bigl\langle \beta\circ \phi\, ,\, \theta{}_\wedge \Phi\circ  a_1{}_\wedge \dots {}_\wedge  \Phi\circ a_{k-1}\bigr\rangle\in C^\infty(M)
\end{equation}
where $\beta\in \Omega^k(A_N)$ and $a_1,\dots ,a_{k-1}\in \Gamma(A_M)$.
\end{prop}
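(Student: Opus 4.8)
The plan is to establish a bijection between $\Phi^*$-derivations of degree $-1$ and sections $\theta \in \Gamma(\phi^* A_N)$ by constructing explicit maps in both directions and checking they are mutually inverse. First I would verify that the formula \eqref{eq:leibnizopreatordef} genuinely produces a $\Phi^*$-derivation of degree $-1$ from a given $\theta$. This requires confirming two things: that $i_\theta^\Phi \beta$ is a well-defined element of $\Omega^{k-1}(A_M)$ (that is, that the right-hand side is $C^\infty(M)$-multilinear and alternating in the arguments $a_1, \dots, a_{k-1}$, which is inherited from the alternating multilinearity of $\beta$ together with the insertion of the fixed section $\theta$ in the first slot), and that it satisfies the Leibniz-type identity \eqref{eq:phiderivationproperty} with $k = -1$. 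The latter is a routine but slightly delicate computation: one pairs $i_\theta^\Phi(\alpha \wedge \beta)$ against a tuple $a_1 {}_\wedge \dots {}_\wedge a_{p+q-1}$, expands the wedge product $\alpha \wedge \beta$ as a signed sum over shuffles, and then isolates the terms according to whether the distinguished section $\theta$ lands in the $\alpha$-block or the $\beta$-block. The sign $(-1)^{(-1)\cdot|\alpha|} = (-1)^{|\alpha|}$ appears precisely because moving $\theta$ past the $|\alpha|$ arguments feeding into $\Phi^*(\alpha)$ incurs that sign.

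For the reverse direction, given a $\Phi^*$-derivation $\cV$ of degree $-1$, I would recover $\theta$ by evaluating $\cV$ on suitable one-forms. The key observation is that a degree $-1$ derivation sends $\Omega^1(A_N) = \Gamma(A_N^*)$ into $\Omega^0(A_M) = C^\infty(M)$, and the assignment $\beta \mapsto \cV(\beta)$ for $\beta \in \Gamma(A_N^*)$ is $C^\infty(N)$-linear after composition with $\phi$ — this linearity is exactly what the derivation property \eqref{eq:phiderivationproperty} forces when one of the factors is a degree-zero element, i.e. a function. Hence $\cV|_{\Omega^1(A_N)}$ defines, pointwise over each $p \in M$, a linear functional on $(A_N)^*_{\phi(p)}$, equivalently an element $\theta(p) \in (A_N)_{\phi(p)} = (\phi^* A_N)_p$; smoothness of $\theta$ follows from smoothness of $\cV$. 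One then checks that $i_{\theta}^\Phi = \cV$ on all of $\Omega^\bullet(A_N)$: they agree on functions (both vanish, for degree reasons) and on one-forms by construction, and since both are $\Phi^*$-derivations and $\Omega^\bullet(A_N)$ is generated as an algebra in degrees $0$ and $1$, the Leibniz property propagates the agreement to all degrees.

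The main obstacle I anticipate is the bookkeeping in the Leibniz identity for the forward direction, specifically tracking the shuffle signs so that the distinguished slot occupied by $\theta$ produces exactly the factor $(-1)^{|\alpha|}$ and no spurious signs. A clean way to organize this is to note that, up to the insertion of $\theta$, the operator $i_\theta^\Phi$ factors as $\Phi^*$ followed by ordinary contraction with $\theta$ in the pullback complex, so the derivation property reduces to the standard fact that interior multiplication by a fixed element is a degree $-1$ derivation of the exterior algebra, combined with the already-established multiplicativity of $\Phi^*$. This reduction avoids writing out the shuffle sum explicitly. Establishing that the two constructions are inverse to each other is then immediate: starting from $\theta$, the recovered section agrees with $\theta$ because evaluating \eqref{eq:leibnizopreatordef} on a one-form $\beta$ returns $\langle \beta \circ \phi, \theta \rangle$, which is precisely the pairing defining $\theta$; and starting from $\cV$, the equality $i_\theta^\Phi = \cV$ was verified above.
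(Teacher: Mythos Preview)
Your proposal is correct and follows essentially the same route as the paper's proof: both verify that $i_\theta^\Phi$ is a $\Phi^*$-derivation, then recover $\theta$ from a given $\cV$ by observing that $\cV$ vanishes on degree $0$, restricts to a $C^\infty(M)$-linear map $\Omega^1(A_N)\to C^\infty(M)$ (with the module structure coming via $\phi^*$), and hence corresponds to a section of $\phi^*A_N$; the equality $\cV=i_\theta^\Phi$ is then forced by the derivation property since $\Omega(A_N)$ is generated in degrees $\leq 1$. The only cosmetic difference is that the paper phrases the recovery of $\theta$ as biduality over $C^\infty(M)$ while you describe it pointwise, and you spell out the factorization $i_\theta^\Phi=\Phi^\star\circ i_\theta$ (which the paper records in the remark immediately following) to streamline the Leibniz check.
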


\begin{proof}
First, one easily checks that the equation \eqref{eq:leibnizopreatordef} defines a $\Phi^*\!$-derivation as in Definition \ref{def:phiderivation}. To see that any $\Phi^*\!$-derivation $\cV$ can be obtained in this way, one notices that, since $\Omega(A_N)$ is generated (as a graded algebra) by $\Omega^{\leq 1}(A_N)$, and because of the  $\Phi$-derivation condition \eqref{eq:phiderivationproperty},  $\cV$ is entirely determined by its restriction to ${\Omega^{\leq 1}(A_N)}$. Since $\cV$ has degree $-1$, this restriction vanishes on $C^\infty(N)$ so it reduces to a map:
$$\cV|_{\Omega^1(A_N)}: \Omega^1(A_N)\to C^{\infty}(M)$$
that is $C^{\infty}(M)$-linear by \eqref{eq:phiderivationproperty}  -- here $\Omega^1(A_N)$ is made into a $C^\infty(M)$-module by extension of the scalars via the pullback map $\phi^*:C^\infty(N)\to C^\infty(M)$. In other words, $\cV|_{\Omega^1(A_N)}$ can be seen as a $C^\infty(M)$-linear map $\cV|_{\Omega^1(A_N)}:\Gamma(\phi^*A^*_N)\to C^\infty(M)$ that is, by biduality a section $\theta\in \Gamma(\phi^*A_N)$, which is precisely what the equation \eqref{eq:leibnizopreatordef} expresses for $k=1$. This last argument also makes clear that the assignment  $\theta\mapsto i_\theta^\Phi$ is injective.
\end{proof}

\begin{remark}
The operator $i_\theta^\Phi: \Omega^\bullet(A_N)\to \Omega^{\bullet-1}(A_M)$ defined by \eqref{eq:leibnizopreatordef} is in fact  obtained as a composition: $i_\theta^\Phi=\Phi^\star\circ i_\theta$, where $i_{\theta}$ denotes the interior product with $\theta$, \emph{seen as a map} $i_\theta:\Omega^\bullet(A_N)\to \Omega^{\bullet-1}(\phi^*A_N)$, and  $\Phi^\star:\Omega(\phi^*A_N)\to \Omega(A_M)$ is the obvious map induced by $\Phi$. 

Later on, we will have to deal with computations involving both $i_\theta^\Phi=\Phi^\star\circ i_\theta$ and Lie algebroid differentials. Since the pullback bundle $\phi^*A_N$ does not come with a Lie algebroid structure, and in order to avoid confusions between $\Phi^*$ and $\Phi^\star$ (the latter being irrelevant) we will rather use the notation $i_\theta^\Phi$. 
\end{remark}


\subsection{The Lie functor}
Similarly as Lie groups having an associated Lie algebra, any Lie groupoid $\G_M\tto M$ gives rise to a Lie algebroid $A_M\to M$. Let us briefly describe the procedure, again we refer to \cite{MK2} for more details.  

Given a Lie groupoid $\G\tto M$, one consider the vector bundle $A:=\ker d\mathbf{s}|_{1_M}$, obtained by restricting the vector bundle  $\ker d\mathbf{s}$ to the units in $\G_M$.

The sections of $A_M$ identify with right-invariant vector fields on $\G_M$ (for this to make sense, such vector fields need to be tangent to the $\mathbf{s}$-fibers) thus $\Gamma(A_M)$ comes naturally endowed with a   Lie bracket.

 The anchor map is then obtained as  $\sharp_A:=d\mathbf{t}|_{A}$, and it is easily checked that the Leibniz rule is satisfied. The Lie algebroid $A$ obtained this way will be denoted by $A:=\Lie(\G)$. 

The above procedure turns ou to be  functorial: given a Lie groupoid morphism $F:\G_M\to \G_M$, one obtains a Lie algebroid morphism $\Lie(F):\Lie(\G_M)\to \Lie(\G_N)$ by setting $\Lie(F):=dF|_{A_M}$.

We obtain this way a functor:
$$ \Lie: \mathsf{LieGpd}\to \mathsf{LieAlgd}$$
called the \emph{Lie functor}. Intuitively, this functor captures the infinitesimal phenomenons occurring in the theory of Lie groupoids.

Note that not every Lie algebroid $A$ is of the form $A=\Lie(G)$ for some Lie groupoid $\G$. When it is the case, we say that $A$ is \emph{integrable}, and that $\G$ \emph{integrates} $A$. There are obstructions to the integrability of a Lie algebroid that were unravelled in  \cite{CrFe2}. 

When a Lie algebroid is integrable, there might be several Lie groupoids integrating it. There is however a unique one, provided we require $\mathbf{s}$-simply connectedness: it is usually denoted by $\G(A)$. Furthermore, if $\Phi:A_M\to A_N$ is a morphism between integrable Lie algebroids, there exists a Lie groupoid morphism $F:\G(a_M)\to \G(A_N)$, denoted by $F=\G(F)$, \emph{integrating} $F$, that is, such that $\Lie(F)=\Phi$. Therefore, up to integrability issues, one might think of the \emph{integration} procedure $\G(-)$ as a functor inverse to $\Lie$.  

\section{Natural transformation as discrete homotopies}\label{sec:naturaltandhomotopies}

Consider two abstract (small) groupoids $\sC$ and $\sD$, together with functors $$F_0,F_1:\sC\to \sD,$$
that shall be fixed for the remaining of this section. Since we shall largely discuss the notion of natural transformation, let us recall the  basic definition.

\begin{defi}\label{def:naturaltransf}\label{defnatural} A \emph{natural transformation} $\eta: F_0\Rightarrow F_1$ is given by an assignment that associates to any object $x$ in $\sC$ an  arrow $\eta_x$ in $\sD$, in such a way that that for any arrow $a:x\to y$ in $\sC$, we have $F_1(a)\circ \eta_x=\eta_y\circ F_0(a)$ as illustrated below.
$$ 
\SelectTips{cm}{}
\xymatrix@R=25pt@C=30pt{ 
	\ar@/^1.2pc/@{{}{ }{}}[rr]^{\displaystyle{\sC}}&	x \ar@/_0.5pc/[d]_{a}\ar@/^1,7pc/@{{}{ }{}}[r]^{}="a"
	&                  \ar@/^1,7pc/@{{}{ }{}}[r]^{}="b"
	   \ar@/^1,2pc/@{{}{ }{}}[rr]^{\displaystyle{\sD}} & 	F_0(x) \ar@/_0.5pc/[d]_{F_0(a)}\ar@/^0.5pc/[r]^{\eta_x} & F_1(x)\ar@/_0.5pc/[d]_{F_1(a)} \\
& y
 &&
	F_0(y) \ar@/^0.5pc/[r]_{\eta_y}& F_1(y) 
	\ar@{->}^{\displaystyle{F_0,F_1}}"a";"b" }
$$
\end{defi}

We also  introduce the following basic tool, that will be usefull in order to understand natural transformations.
\begin{defi}\label{def:discreteinterval}
We call \emph{discrete interval}  the pair groupoid over $\{0,1\}$, and denote it by $\II$.
\end{defi}
More precisely, the discrete interval is the groupoid:
$$\II:=\left\{0 \,\substack{{\rightarrow} \\ {\leftarrow}}\,1\right\}$$ 
with $\{0,1\}$ as space of objects, and whose only non identical morphisms are given by a single arrow $\tau_{1,0}:0\to 1$ together with its inverse $\tau_{0,1}:1\to 0$. 
	 
 In the theory of categories, the discrete interval plays a similar role  as the usual interval $I=[0,1]$ in basic homotopy theory, as we now explain. Although we are only interested in groupoids, the following discussion can  easily be adapted to categories.

One may consider the product groupoid $\sC\times \II$, and notice that it contains two copies of $\sC$. More precisely, there are two obvious embeddings:
$$i_0,i_1:\sC \hookrightarrow \sC\times \II,$$
that send an object $x$ in $\sC$ to $(x,0)$ (respectively,  to $(x,1)$) and to a morphism $a$ in $\sC$ to $a\times \id_0$  (respectively, to $a\times \id_1$) where $\id_0,\id_1$ denote the identities in $\II$.

\begin{prop}\label{prop:naturaltandhomotopies}
Given two functors $F_0,F_1:\sC\to \sD$, there is one-to-one correspondence between:
\begin{itemize}
\item natural transformations $\eta: F_0\Rightarrow F_1$,
\item functors $F:\sC\times \II\to \sD$ such that $F_0=F\circ i_0$ and $F_1=F\circ i_1$.
\end{itemize}
\end{prop}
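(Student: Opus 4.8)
The plan is to construct the correspondence explicitly in both directions and then check that the two constructions are mutually inverse. Starting from a functor $F:\sC\times\II\to\sD$ satisfying $F\circ i_0=F_0$ and $F\circ i_1=F_1$, I would define, for each object $x$ of $\sC$, the arrow
$$\eta_x:=F(\id_x,\tau_{1,0}):F_0(x)\to F_1(x),$$
which makes sense because $(\id_x,\tau_{1,0})$ is a morphism $(x,0)\to(x,1)$ in $\sC\times\II$ and $F(x,0)=F_0(x)$, $F(x,1)=F_1(x)$ by hypothesis. To verify naturality, I would use the fact that composition in the product groupoid is componentwise, so that for any arrow $a:x\to y$ the morphism $(a,\tau_{1,0}):(x,0)\to(y,1)$ admits the two factorizations
$$(a,\tau_{1,0})=(\id_y,\tau_{1,0})\circ(a,\id_0)=(a,\id_1)\circ(\id_x,\tau_{1,0}),$$
since $\tau_{1,0}\circ\id_0=\id_1\circ\tau_{1,0}=\tau_{1,0}$. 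Applying $F$ to both factorizations and using $F(a,\id_0)=F_0(a)$ and $F(a,\id_1)=F_1(a)$ then yields $\eta_y\circ F_0(a)=F_1(a)\circ\eta_x$, which is precisely the naturality square of Definition \ref{def:naturaltransf}.

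Conversely, given a natural transformation $\eta$, I would build a candidate functor $F$ by setting $F(x,0):=F_0(x)$ and $F(x,1):=F_1(x)$ on objects, and on morphisms, for $a:x\to y$,
$$F(a,\id_0):=F_0(a),\qquad F(a,\id_1):=F_1(a),$$
$$F(a,\tau_{1,0}):=F_1(a)\circ\eta_x,\qquad F(a,\tau_{0,1}):=F_0(a)\circ\eta_x^{-1}.$$
The decisive structural point here is that $\sD$ is a \emph{groupoid}, so each component $\eta_x$ is automatically invertible; this is exactly what makes the assignment on the backward arrow $\tau_{0,1}$ meaningful, and it is where the groupoid hypothesis (rather than a mere category) enters. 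Naturality of $\eta$ guarantees the consistency of these formulas, for instance $F(a,\tau_{1,0})=F_1(a)\circ\eta_x=\eta_y\circ F_0(a)$.

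It then remains to check that $F$ is genuinely a functor, i.e.\ that it preserves identities and composition, and I expect this to be the main --- though essentially routine --- part of the argument. The verification reduces to a finite case analysis according to the $\II$-components of two composable arrows, each case following from functoriality of $F_0$ and $F_1$ together with the relation $F_1(a)\circ\eta_x=\eta_y\circ F_0(a)$ and its inverse. Finally, I would confirm that the two assignments are mutually inverse: rebuilding a functor from $\eta_x=F(\id_x,\tau_{1,0})$ returns $F_1(a)\circ\eta_x=F(a,\id_1)\circ F(\id_x,\tau_{1,0})=F(a,\tau_{1,0})$ on each generating arrow, and conversely extracting $\eta$ from the constructed $F$ gives $F(\id_x,\tau_{1,0})=F_1(\id_x)\circ\eta_x=\eta_x$. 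The only conceptual ingredient beyond bookkeeping is thus the invertibility of the $\eta_x$, with all remaining content being the componentwise compatibility in the pair groupoid $\II$.
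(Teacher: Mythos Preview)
Your proposal is correct and follows essentially the same approach as the paper: both directions of the correspondence are built from the formulas $\eta_x=F(\id_x,\tau_{1,0})$ and $F(\id_x,\tau_{0,1})=\eta_x^{-1}$, with naturality obtained from the two factorizations of $(a,\tau_{1,0})$ in the product groupoid. Your write-up is in fact more detailed than the paper's (which leaves the functoriality check and the mutual-inverse verification to the reader), and you make explicit the role of the groupoid hypothesis in ensuring invertibility of the $\eta_x$, a point the paper leaves implicit.
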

\begin{proof}
Given  $\eta: F_0\Rightarrow F_1$, the restrictions $F|_{i_0(\sC)}$ and $F|_{i_0(\sC)}$ being imposed, all we need in order to define $F$ is to specify the images of the morphisms $\id_x\times \tau : (x,0)\to (x,1)$ and $\id_x\times \tau^{-1} : (x,1)\to (x,0)$. This is easily done by setting:
\begin{align*}
F(\id_x\times \tau_{1,0})&=\eta_x ,    \\
F(\id_x\times \tau_{0,1})&=\eta_x^{-1}.
\end{align*}
We leave it to the reader to check that the conditions for $\eta$ to define a natural transformation imply that $F$ is indeed a functor.

Reciprocally, given a functor $F:\sC\times \II\to \sD$, we easily check that the above formulas define a natural transformation for $\eta:F_0\Rightarrow F_1$. 

The functor $F$ and how it relates with $\eta$ can be pictured as follows:
$$\SelectTips{cm}{}
\xymatrix@R=30pt@C=35pt{
	\ar@{}[r]_{\displaystyle{\sC \times \II\quad}}  
	&
	&& \ar@{}[r]_{\displaystyle{\sD}}&\\
	(x,0) \ar@/_0.5pc/[d]_{a\times \id_0}\ar@/^0.5pc/[r]^{\id_x\times \tau_{0,1}} & (x,1)\ar@/_0.5pc/[d]^{a\times \id_1}
	\ar@{{}{ }{}}@/^2,8pc/[r]^{}="a"
	&\ar@{{}{ }{}}@/^2,8pc/[r]^{}="b"&
	F_0(x) \ar@/_0.5pc/[d]_{F_0(a)}\ar@/^0.5pc/[r]^{\eta_x} & F_1(x)\ar@/_0.5pc/[d]^{F_1(a)}
	\\
	(y,0) \ar@/^0.5pc/[r]^{\id_x\times \tau_{0,1}}& (y,1)
	&&
	F_0(y) \ar@/^0.5pc/[r]^{\eta_y}& F_1(y)
	\ar@{->}^{\displaystyle{F}}"a";"b"}$$
Notice that the diagram on the left hand side always commutes because of the multiplication on the product  $\sC\times \II$. By functoriality, the diagram of the right hand side is also commutative. In fact, this point of view explains the commutativity condition in the Definition \ref{def:naturaltransf} of a natural transformation.
\end{proof}
\begin{remark}\label{rem:defnatural}
Although the Definition \ref{defnatural} is the one usually appearing in textbooks, one may argue that the description obtained from the Proposition~\ref{prop:naturaltandhomotopies} is more natural from the categorical point of view, in the sense that it spells out the notion of natural transformation purely in terms of functors. It will turn out to be our starting point in order to understand natural transformations in the smooth setting. This idea of natural transformation being "discrete" homotopies between functors is not original, see for instance the comments in \cite[\S 6.5]{RBrown}.
\end{remark}

 Moving to the smooth settings, we now consider two Lie groupoids $\G_M\tto M$ and $\G_N\tto N$, together with Lie groupoid morphisms $F_0,F_1:\G_M\to \G_N$. We denote by $\phi_0:M\to N$ and $\phi_1:M\to N$ the corresponding  smooth maps induced on the bases. 

There is no subtlety in the definition of a natural transformation between Lie groupoid morphisms. We just adapt the definition to the differential geometric setting (in other words, we internalize natural transformations in the category of differentiable manifolds).
\begin{defi}
A (smooth) \emph{natural transformation} $\eta: F_0\Rightarrow F_1$ is a smooth map $\eta: M \to \G_N $
such that $F_1(a)\circ \eta_x=\eta_y\circ F_0(a)$ for any $a\in \G_M$. 
\end{defi}
One may notice that the discrete interval $\II\tto \{0,1\}$  endowed with the discrete topology has a natural structure of a \emph{Lie} groupoid. Furthermore, the Proposition~\ref{prop:naturaltandhomotopies} immediately generalizes to Lie groupoids, as follows.

\begin{prop}\label{prop:naturaltandhomotopiessmooth}
Given two Lie groupoid morphisms $F_0,F_1:\G_M\to \G_N$, there is one-to-one correspondence between:
\begin{itemize}
\item Natural transformations $\eta: F_0\Rightarrow F_1$,
\item Lie groupoid morphisms $F:\G_M\times \II\to \G_N$ such that $F_0=F\circ i_0$ and $F_1=F\circ i_1$.
\end{itemize}
\end{prop}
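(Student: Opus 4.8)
The plan is to reduce Proposition \ref{prop:naturaltandhomotopiessmooth} to its purely categorical version, Proposition \ref{prop:naturaltandhomotopies}, and then check that the correspondence there is compatible with the smooth structures on both sides. The key observation is that the discrete interval $\II$, equipped with the discrete topology, is a genuine Lie groupoid: it is a $0$-dimensional manifold, its structure maps are trivially smooth, and the source map is a submersion. Consequently $\G_M\times\II$ is a Lie groupoid (a disjoint union of two copies of $\G_M$), and the two embeddings $i_0,i_1:\G_M\hookrightarrow \G_M\times\II$ are Lie groupoid morphisms. This sets up both sides of the asserted bijection as morphisms in $\mathsf{LieGpd}$.

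First I would run the underlying set-theoretic (functorial) bijection from Proposition \ref{prop:naturaltandhomotopies} verbatim, since a smooth natural transformation is in particular an ordinary one and a Lie groupoid morphism is in particular a functor. This already gives a bijection between natural transformations $\eta:F_0\Rightarrow F_1$ (as mere assignments) and functors $F:\G_M\times\II\to \G_N$ with $F_0=F\circ i_0$ and $F_1=F\circ i_1$. What remains is purely a smoothness check: starting from a smooth $\eta$, I must verify that the functor $F$ produced by the formulas $F(\id_x\times\tau_{1,0})=\eta_x$ and $F(\id_x\times\tau_{0,1})=\eta_x^{-1}$ is smooth as a map of manifolds, and conversely that a smooth $F$ yields a smooth $\eta$.

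For the forward direction I would argue componentwise on the two connected pieces of $\G_M\times\II$. On the copy $i_0(\G_M)$ the map $F$ agrees with $F_0$, and on $i_1(\G_M)$ with $F_1$; both are smooth by hypothesis. The genuinely new content lives on the arrows of the form $a\times\tau_{1,0}$ and $a\times\tau_{0,1}$ connecting the two copies, but each such arrow factors through a unit-type arrow via the multiplication in $\G_M\times\II$, so $F$ on these components is obtained by composing the smooth maps $F_0$, $F_1$, the inversion, the multiplication of $\G_N$, and the map $x\mapsto\eta_x$, all of which are smooth. Hence $F$ is smooth. Conversely, since $\eta_x=F(\id_x\times\tau_{1,0})$ and the assignment $x\mapsto \id_x\times\tau_{1,0}$ is a smooth map $M\to \G_M\times\II$ (it is the composition of the unit section of $\G_M$ with one of the smooth inclusions), the map $\eta:M\to\G_N$ is smooth as a composition of smooth maps.

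The main obstacle is essentially bookkeeping rather than conceptual difficulty: one must make sure that the maps appearing in the formulas for $F$ on the mixed arrows are genuinely expressed as compositions of the declared smooth structure maps, and in particular that inversion in $\G_N$ is used to handle $\eta_x^{-1}$. No integrability or differentiation enters here; the content is that internalizing the construction of Proposition \ref{prop:naturaltandhomotopies} in $\mathsf{Man}$ preserves the bijection, which follows once $\II$ is recognized as a Lie groupoid and the relevant sections and projections are recognized as smooth.
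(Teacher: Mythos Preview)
Your proposal is correct and matches the paper's approach: the paper does not give a separate proof of this proposition at all, simply remarking that Proposition~\ref{prop:naturaltandhomotopies} ``immediately generalizes to Lie groupoids'' once one notes that $\II$ with the discrete topology is a Lie groupoid. Your write-up just makes explicit the smoothness bookkeeping (decomposing the arrow space of $\G_M\times\II$ into four copies of $\G_M$ and expressing $F$ on each as a composite of the given smooth data with the structure maps of $\G_N$) that the paper leaves to the reader.
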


\begin{remark}\label{rem:infinitesimalnaturaltransf}
The above proposition puts the notion of natural transformation in such a perspective that makes it obvious its infinitesimal counterpart, namely that there is none. 

Let us spell this out: in the theory of Lie groupoids, the \emph{infinitesimal counterpart} is obtained by applying the $\Lie$ functor.
 By doing so to a Lie groupoid morphism of the form $F:\G_M\times \II\to \G_N$ as in Proposition \ref{prop:naturaltandhomotopiessmooth}, we obtain a Lie algebroid morphism:
$$\Lie(F):\Lie(\G_M\times \II)\to \Lie(\G_N).$$
However, it is easily seen that $\Lie(\G_M\times \II)$ canonically identifies with the disjoint union of two copies of $A_M:=\Lie(\G_M)$, namely we have:
$$\Lie(\G_M\times \II)=A_M \amalg A_M.$$ 
This is due to the fact that the space of arrows in $\II$ come with a  discrete topology. As a consequence, $\G_M\times \II$ does not have connected source-fibers, even if $\G_M$ does. Hence, when applying the $\Lie$ functor to $F$, all information about the natural transformation $\eta$ is lost. In other words, from this point of view there is no {infinitesimal} counterpart to a single natural transformation.
\end{remark}

\begin{remark}
Natural transformations also have a nice interpretation in terms of generalized morphisms between Lie groupoids that we believe is worth pointing out. It that goes as follows.

 We first observe that the projection $\pi:\G_M\times \II \to \G_M$ is a Morita map. This comes from the fact that $\G_M\times \II$ can be obtained as a pullback groupoid. For this we consider the open cover of $M$ given by two open subsets, both equal to $M$. Denote by $p:M\coprod M\to M$ the projection, and by $p^!\G_M$ the corresponding pullback groupoid. Then there is an obvious identification $\G_M\times \II\simeq p^!\G_M$.
 
 This way, one can think of a natural transformation $F:\G_M\times \II\to \G_N$ as a generalized map $\G_M \dashrightarrow \G_N$. Indeed, such maps are preciseley defined as triples $(\widehat{\G}_M,\pi, F)$ where $\pi:\widehat{\G}_M \to  \G_N$ is a Morita map, and $F:\hat{\G}_M\to \G_N$ is a Lie groupoid morphism, as illustrated in the diagram below:
 $$
 \SelectTips{}{}
 \xymatrix{
 \G_M\times \II\ar[dr]^{F}\ar[d]^{\pi} &                      \\
 \G_M \ar@{-->}[r]_{(\widehat{\G}_M,\pi, F)} & \G_N.
 }
 $$
 The importance of such generalized morphisms comes from the fact that allow to build the localization of $\mathsf{LieGpd}$ along Morita maps. More concretely, one can think of the triple $(\widehat{\G}_M,\pi, F)$ as a composition $F\circ (\pi)^{-1}$ even though $\pi$ has no inverse in $\mathsf{LieGpd}$.
 
\end{remark}

\section{Natural homotopies}
The discussion of the previous section, and in particular the Remark \ref{rem:infinitesimalnaturaltransf}, seems to make hopeless the existence of an infinitesimal object that that would integrate to a  natural transformation. However, as suggested by the Proposition~\ref{prop:naturaltandhomotopies}, the discrete interval  $\II$ is a mere algebraic model for homotopies between functors. Yet, in the context of Lie groupoids, there is another obvious candidate to play this role, namely the pair groupoid $\bI:=(I\times I\tto I)$ over the interval $I:=[0,1]$. We shall denote by $\tau_{t_1,t_0}:t_0\to t_1$ the arrows in $\bI$.

 Note that that $\bI$ comes indeed with a natural structure of a Lie groupoid. Furthermore, given a Lie groupoid $\G_M\tto M$, we now have a whole family of embeddings parametrized by $t\in I$, as follows:
 $$i_t:\G_M \hookrightarrow \G_M\times \bI.$$
 Notice also that there is also an obvious embedding $\II \hookrightarrow \bI$, so that we can identify $i_t$ for $t=0,1$ with the embeddings into $\G_M\times \II$ defined in  the previous section.
 
With this smooth model of an interval at hand, the Proposition~\ref{prop:naturaltandhomotopies} suggests to adapt our approach by defining natural \emph{homotopies} as follows.
  
\begin{defi}\label{def:nauralhomotopygroupoid}
	Given two Lie groupoid morphisms $F_0,F_1:\G_M\to \G_N$, a \emph{natural homotopy} $ F_0\to F_1$ is a Lie groupoid morphism
	$$F:\G_M\times \bI \to \G_N.$$
	such that $F_0=F\circ i_0$ and $F_1=F\circ i_1$.
\end{defi}

 By an analogue of the Proposition \ref{prop:naturaltandhomotopiessmooth}, associated to a natural homotopy, rather than a single natural transformation, we now recover a whole smooth family of them.
\begin{prop}\label{prop:naturaltandhomotopiessmooth}
Given two Lie groupoid morphisms $F_0,F_1:\G_M\to \G_N$, there is one-to-one correspondence between:
\begin{itemize}
\item natural homotopies $F:\G_M\times \bI\to \G_N$ such that $F_0=F\circ i_0$ and $F_1=F\circ i_1$.
\item smooth families of natural transformations $\eta_t: F_0\Rightarrow F_t$.
\end{itemize}
\end{prop}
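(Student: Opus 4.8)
The plan is to run a smooth, parametrized version of the argument behind Proposition~\ref{prop:naturaltandhomotopies}, exploiting that the pair groupoid $\bI$ has an arrow $\tau_{t_1,t_0}\colon t_0\to t_1$ between \emph{every} pair of points of $I$, not merely between $0$ and $1$. Throughout I would write an arrow of $\G_M\times\bI$ as $a\times\tau_{t_1,t_0}\colon(x,t_0)\to(y,t_1)$, with $a\colon x\to y$ in $\G_M$, and let $\phi_t\colon M\to N$ denote the base map of $F_t:=F\circ i_t$.

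For the forward direction, given a natural homotopy $F$ I would set $F_t:=F\circ i_t$ and define, for each $t$ and each object $x$,
$$\eta_t(x):=F(\id_x\times\tau_{t,0}),$$
an arrow $\phi_0(x)\to\phi_t(x)$ of $\G_N$. To see that $\eta_t\colon F_0\Rightarrow F_t$ is natural, I would factor the arrow $a\times\tau_{t,0}\colon(x,0)\to(y,t)$ in the two evident ways coming from the multiplication of $\G_M\times\bI$, namely
$$a\times\tau_{t,0}=(a\times\tau_{t,t})\circ(\id_x\times\tau_{t,0})=(\id_y\times\tau_{t,0})\circ(a\times\tau_{0,0}),$$
and apply $F$ to both: this yields exactly $F_t(a)\circ\eta_t(x)=\eta_t(y)\circ F_0(a)$, the naturality square. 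Smoothness of the family $(x,t)\mapsto\eta_t(x)$ is immediate, being the composite of $F$ with the smooth map $(x,t)\mapsto\id_x\times\tau_{t,0}$, and $\eta_0=\id$ because $\tau_{0,0}$ is a unit.

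For the converse, given a smooth family $\eta_t\colon F_0\Rightarrow F_t$ normalized by $\eta_0=\id$, I would define $F$ on objects by $(x,t)\mapsto\phi_t(x)$ and on arrows by the conjugation formula
$$F(a\times\tau_{t_1,t_0}):=\eta_{t_1}(\target a)\circ F_0(a)\circ\eta_{t_0}(\source a)^{-1}.$$
A direct check — using only that $F_0$ is a functor, that each $\eta_t$ is natural, and the cancellation $\eta_{t_1}(y)^{-1}\circ\eta_{t_1}(y)=\id$ — shows $F$ is multiplicative and unital, hence a functor, while smoothness of $F$ follows from smoothness of $\eta$, of $F_0$, and of the multiplication and inversion maps of $\G_N$. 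The boundary conditions hold because $\eta_0=\id$ forces $F\circ i_0=F_0$, whereas naturality of $\eta_1$ gives $F\circ i_1=F_1$. Finally I would verify that the two assignments are mutually inverse: reinserting the extracted $\eta$ into the conjugation formula and using the factorization $\tau_{t_1,t_0}=\tau_{t_1,0}\circ\tau_{0,t_0}$ of every arrow of $\bI$ through the basepoint $0$ recovers $F$, and extracting $\eta$ from the conjugation formula returns the original family, again using $\eta_0=\id$.

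The algebra here is a routine transcription of the discrete case, so I do not expect it to be the obstacle; the only genuinely new content is smoothness, which is handled by the remarks above. The one structural fact that makes the \emph{whole} family $\eta_t$ appear, rather than a single transformation, is that in the pair groupoid every arrow factors through the basepoint $0\in I$, which is precisely what reduces a natural homotopy to the pair consisting of $F_0$ together with the family $\{\eta_t\}$.
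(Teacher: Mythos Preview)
Your proof is correct and follows essentially the same approach as the paper's: define $\eta_t(x):=F(\id_x\times\tau_{t,0})$ in one direction, and rebuild $F$ from the $\eta_t$ via a conjugation-type formula in the other. Your version is in fact slightly more explicit than the paper's on two points: you write down $F$ on a general arrow $a\times\tau_{t_1,t_0}$ rather than only on $\id_x\times\tau_{t_2,t_1}$, and you make the normalization $\eta_0=\id$ explicit, which is needed for the correspondence to be genuinely bijective (otherwise precomposing every $\eta_t$ by a fixed natural automorphism $\mu\colon F_0\Rightarrow F_0$ yields the same $F$).
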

\begin{proof}
The proof is analogous to that of Proposition  \ref{prop:naturaltandhomotopies}.
Given $F:\G_M\times \bI\to \G_N$, we define family of functors  $F_t:=F\circ i_t:\G_M\to \G_N$, and an application $\eta:M\times I \to \G_N$ as follows:
\begin{align*}
\eta_{(x,t)}:=F(\id_x\times \tau_{t,0}),   
\end{align*}
As illustrated below, we have:	
$$\SelectTips{cm}{}
\xymatrix@R=30pt@C=35pt{
	         \ar@{}[r]_{\displaystyle{\G_M \times \bI\quad}}  
	            &
	&& \ar@{}[r]_{\displaystyle{\G_N}}&\\
	(x,0) \ar@/_0.5pc/[d]_{a\times \id_0}\ar@/^0.5pc/[r]^{\id_x\times \tau_{0,t}} & (x,t)\ar@/_0.5pc/[d]^{a\times \id_t}
	 \ar@{{}{ }{}}@/^2,8pc/[r]^{}="a"
	&\ar@{{}{ }{}}@/^2,8pc/[r]^{}="b"&
	F_0(x) \ar@/_0.5pc/[d]_{F_0(a)}\ar@/^0.5pc/[r]^{\eta^t_x} & F_t(x)\ar@/_0.5pc/[d]^{F_t(a)}
	 \\
	(y,0) \ar@/^0.5pc/[r]^{\id_x\times \tau_{0,t}}& (y,t)
    &&
	F_0(y) \ar@/^0.5pc/[r]^{\eta^t_y}& F_t(y)
\ar@{->}^{\displaystyle{F}}"a";"b"}$$
The diagram on the left hand side commutes  as a consequence of the definition of the multiplication in the product $\G_M\times \bI$, therefore, the one on the right hand side commutes by functoriality of $F$. It follows that $\eta_t:F_0\Rightarrow F_t$ is a natural transformation for all $t\in I$. Furthermore, $\eta$ is clearly smooth as a map $\eta:M\times I\to \G_N$.

Reciprocally, given a smooth family of natural transformations $\eta_t:F_0\Rightarrow F_t$. We define $F$ as follows:
$$ F(\id_x\times \tau_{t_2,t_1}):=\eta_{(x,t_2)}\cdot \eta_{(x,t_1)}^{-1} $$
for all $x\in M$. It is then straightforward to check that $F$ defines a Lie groupoid morphism $\G_M\times \bI\to \G_N$ as a consequence of $\eta_t$ being a natural transformation for all $t\in I$.
\end{proof}

\section{Infinitesimal counterpart of a natural homotopy}\label{sec:infinitesimal}
The Lie groupoid $\bI\tto I$  has associated Lie algebroid given by the tangent bundle:
$$\Lie(\bI)=(TI\to I).$$
With the Remark \ref{rem:infinitesimalnaturaltransf} in mind,  there is now an obvious infinitesimal counterpart of a natural homotopy, obtained by applying the $\Lie$ functor to the Definition \ref{def:nauralhomotopygroupoid}, as follows.
\begin{defi}
	Given two Lie algebroid morphisms $\Phi_0,\Phi_1:A_M\to A_N$, a \emph{natural homotopy} $ \Phi_0\to \Phi_1$ is a Lie algebroid  morphism
	$$\Psi:A_M\times TI \to A_N.$$
	such that $\Phi_0=\Psi\circ i_0$ and $\Phi_1=\Psi\circ i_1$, where $i_0,i_1:A_M\to A_M\times TI$ denote the obvious injections. 
\end{defi}

\begin{ex}
An $A$-path $adt:TI \to A$ between $x:=p_A\circ a(0)$ and $y=p_A\circ a(1)$ can be seen as a natural homotopy between the Lie algebroids morphisms $0_x:0_{\{*\}}\to A$ and $0_y:0_{\{*\}}\to A$. Here, $0_{\{*\}}$ denotes the trivial Lie algebroid over a point, and $0_x,\ 0_y$ are the obvious Lie algebroid maps associated with $x$ and $y$.
\end{ex}
 
Below, we shall look at natural homotopies between Lie algebroids morphisms in more details, part of the discussion is based on  \cite{BojKotovStrobl}, though our approach is slightly different, avoiding graphs. Along the text, we shall abuse notations, in particular $\partial t$ may denote indistinctly the obvious section of either vector bundle $TI\to I$ or $A_M\times TI\to M\times I$. Similarly, we shall not distinguish  $dt\in \Omega^1(TI)$ from its pullback along the projection $A_M\times TI \to TI$. We also use rather obvious notions (and notations) for "smooth families" of sections, of maps, etc... Although elementary, these notions turn out to be fundamental for the theory (see Remark \ref{rem:timedependentderivation}) and we shall refer to the Appendix \ref{sec:timedependence} for clarifications.

\begin{lemma}\label{lem:psidecvectorbundle}

 Any vector bundle map $\Psi:A_M\times TI \to A_N$ writes uniquely under the form:
\begin{equation}\label{eq:Psi=phi+theta}
\Psi:=\Phi +\theta dt
\end{equation} 
where $\Phi_t:A_M \to A_N$ is a smooth family of vector bundle maps covering $\phi_t:M\to N$, and  $\theta_t \in \Gamma(\phi_t^*A_M)$ a smooth family of sections of $A_N$ supported by $\phi_t$. Here $\phi: M\times I\to N$ is the map underlying $\Psi$.

 Reciprocally, any pair $(\Phi_t,\theta_t)$, where $\Phi_t: A_M\to A_N$ is a vector bundle map, and $\theta_t\in \Gamma(\phi_t^*A_N)$ is a smooth family of sections supported by the base map $\phi_t:M\to N$ of $\Phi_t$ induces by \eqref{eq:Psi=phi+theta} a vector bundle map $\Psi:A_M\times TI\to A_N$.
\end{lemma}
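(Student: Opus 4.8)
The plan is to exploit the fact that, as a vector bundle over $M\times I$, the domain $A_M\times TI$ is a Whitney sum of two pullback bundles, the second of which is canonically trivialized. Concretely, writing $\mathrm{pr}_1:M\times I\to M$ and $\mathrm{pr}_2:M\times I\to I$ for the projections, there is a canonical identification $A_M\times TI\cong \mathrm{pr}_1^*A_M\oplus \mathrm{pr}_2^*TI$, and the section $\partial t$ trivializes $\mathrm{pr}_2^*TI\cong \underline{\R}$. Since every bundle map out of a Whitney sum splits as the sum of its restrictions to the two summands, the whole content of the statement is to identify each of these restrictions.

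First I would fix $t\in I$ and restrict $\Psi$ to the slice where the $TI$-component vanishes, i.e. to the subbundle $A_M\times\{0_t\}\cong A_M$. This yields, for each $t$, a vector bundle map $\Phi_t:A_M\to A_N$ covering $\phi_t:=\phi(-,t):M\to N$; varying $t$ and using that $\Psi$ is smooth gives a smooth family $\Phi_t$ in the sense of the Appendix~\ref{sec:timedependence}. Next, evaluating $\Psi$ on the trivializing section $\partial t$ produces, for each $(p,t)$, an element $\theta_t(p):=\Psi(0_p,\partial t|_t)\in (A_N)_{\phi_t(p)}$, which is exactly a smooth family of sections $\theta_t\in\Gamma(\phi_t^*A_N)$ supported by $\phi_t$. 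Fiberwise linearity of $\Psi$ then gives, for $(a,v)\in (A_M)_p\oplus T_tI$,
\[
\Psi(a,v)=\Psi(a,0_t)+dt(v)\,\Psi(0_p,\partial t|_t)=\Phi_t(a)+dt(v)\,\theta_t(p),
\]
which is precisely the asserted form \eqref{eq:Psi=phi+theta}.

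For uniqueness I would simply reverse these two operations: given any decomposition $\Psi=\Phi+\theta dt$, restricting to the slice where the $TI$-component vanishes forces $\Phi$ to agree with $\Phi_t$ (as $dt$ vanishes there), while evaluating on $\partial t$ forces $\theta_t(p)=\Psi(0_p,\partial t|_t)$ (as $\Phi$ ignores the $TI$-component). The converse direction is then immediate: given a smooth pair $(\Phi_t,\theta_t)$, the assignment $(a,v)\mapsto \Phi_t(a)+dt(v)\,\theta_t$ is fiberwise linear and smooth, hence a vector bundle map $A_M\times TI\to A_N$ of the required form.

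No step presents a genuine obstacle; the only thing requiring care is the bookkeeping of smoothness, namely checking that the $t$-dependence of $\Phi_t$ and $\theta_t$ extracted from the smooth map $\Psi$ is itself smooth in the precise sense of smooth families. This is handled by the conventions recorded in the Appendix~\ref{sec:timedependence}, together with the consistent use of the normalization $dt(\partial t)=1$ fixed by the notational conventions preceding the lemma.
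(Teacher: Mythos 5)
Your proposal is correct and follows essentially the same route as the paper: both decompose $A_M\times TI$ as a Whitney sum over $M\times I$ (the paper writes it as $(A_M\times I)\oplus(M\times TI)$, which is your $\mathrm{pr}_1^*A_M\oplus\mathrm{pr}_2^*TI$), set $\Phi:=\Psi|_{A_M\times I}$ and $\theta:=\Psi\circ\partial t$, and deduce the decomposition and its converse from fiberwise linearity. Your write-up merely spells out the uniqueness and smoothness bookkeeping slightly more explicitly than the paper does.
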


\begin{proof}[Proof of lemma \ref{lem:psidecvectorbundle}]
The vector bundle $A_M\times TI\to M\times I$ is obtained as a the Whitney sum of vector bundles over $M\times I$, as follows:
\begin{equation}\label{eq:AMTIdecomposition}
A_M\times TI=(A_M\times I)\oplus (M\times TI).
\end{equation} 
Further, a vector bundle map $A_M\times TI\to A_N$ covering a smooth map $\phi:M\times I\to N$ is the same as a vector bundle map $A_M\times TI\to \phi^*A_N$ covering the identity of $M\times I$. Decomposing $\Psi$ into each factor in \eqref{eq:AMTIdecomposition} the result easily follows,  namely we set $\Phi:=\Psi|_{A_M\times I}$, and $\theta:=\Psi\circ \partial t$ so that $\Psi|_{M\times TI}=\theta dt$. The converse statement should now be obvious.
\end{proof}

As explained in \S \ref{sec:LiealgasDGAs}, Lie algebroid morphisms are better described in terms of DGAs. The aim of the discussion below is to  derive the conditions, on $\Phi$ and $\theta$ in order for $\Psi=\Phi+\theta dt$ to define a Lie algebroid morphism in terms of DGAs.  
We shall fix a smooth family of vector bundle maps $\Phi_t:A_M\to A_N$ as in section \S \ref{sec:timedependence}, Recall that we denote by $\phi_t:M\to N$ the underlying family of smooth maps, and by  $\Phi^*:\Omega(A_N)\to \Omega(A_M)^I$ the morphism of graded algebras induced on forms. Here, $\Omega(A_M)^I$ denote the space of time dependent forms on $A_N$, defined as:
$$\Omega(A_N)^I:=\Omega(A_N\times I),$$
where $A_N\times I$ is seen as a vector bundle over $N\times I$.
\begin{prop}\label{prop:psiliealgebroidmorphism}
Let $\Phi_t:A_M\to A_N $ be a smooth family of vector bundle maps covering  $\phi_t:M\to N$,  and $\theta_t\in \Gamma(\phi_t^* A_N)$ a smooth family of sections of $A_N$ supported by $\phi_t$.

 Then $\Psi:=\Phi+\theta dt:A_M\times TI\to A_N$ defines a Lie algebroid morphism if and only if the following conditions hold for any $t\in I$:
 \begin{enumerate}[i)]
 	\item  $\Phi_t:A_M\to A_N$ is a Lie algebroid morphism,
 	\item the $\Phi_t^*$-derivation $i_{\theta_t}^{\Phi_t}$ as in Prop. \ref{prop:derivationandtheta} satisfies:
 	\begin{equation}\ddt \Phi^*_t= d_{A_M}\circ i_{\theta_t}^{\Phi_t}+  i_{\theta_t}^{\Phi_t} \circ d_{A_N},  \label{eq:Phimorphissm1}
 	\end{equation}
 \end{enumerate}
\end{prop}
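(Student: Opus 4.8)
The plan is to rely on the DGA characterization of Lie algebroid morphisms recalled in \S\ref{sec:LiealgasDGAs}. Since $\Psi^*$ is the pullback along a vector bundle map, it is automatically multiplicative, $\Psi^*(\alpha\wedge\beta)=\Psi^*(\alpha)\wedge\Psi^*(\beta)$, so the only content of ``$\Psi$ is a Lie algebroid morphism'' is the commutation with the differentials, namely
$$\Psi^*\circ d_{A_N}=d_{A_M\times TI}\circ \Psi^*.$$
The whole proof then consists in computing both sides of this identity after splitting every form on $A_M\times TI$ into a part containing $dt$ and a part free of $dt$, and matching the two components separately. I expect the $dt$-free component to reproduce condition $i)$ and the $dt$-component to reproduce condition $ii)$.

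First I would decompose $\Psi^*$. Using the splitting \eqref{eq:AMTIdecomposition} together with Lemma \ref{lem:psidecvectorbundle}, which tells us that $\Psi$ restricts to $\Phi_t$ on the $A_M$-factor and sends $\partial_t$ to $\theta_t$, I would evaluate $\Psi^*\varepsilon$ on arguments split into $A_M$-components and $\partial_t$-components. Terms containing two or more copies of $\partial_t$ vanish by antisymmetry, so only the $dt$-free part and the part linear in $dt$ survive. Contracting with $\partial_t$ and comparing with the defining formula \eqref{eq:leibnizopreatordef} identifies the latter with the $\Phi_t^*$-derivation $i_{\theta_t}^{\Phi_t}$ of Proposition \ref{prop:derivationandtheta}, yielding
$$\Psi^*\varepsilon=\Phi_t^*\varepsilon+dt\wedge i_{\theta_t}^{\Phi_t}\varepsilon,\qquad \varepsilon\in\Omega^\bullet(A_N).$$

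Next I would establish the decomposition of the differential of the product Lie algebroid. Since $A_M\times TI$ is a product, its extra section $\partial_t$ has anchor $\partial_t$ and satisfies $[\partial_t,a]=\partial_t a$ and $[\partial_t,\partial_t]=0$, while sections of $A_M$ keep their bracket; inserting this into the Koszul formula for $d_{A_M\times TI}$ and checking on generators of $\Omega(A_M\times TI)$ (functions and degree-one forms, which is enough because $d_{A_M\times TI}$ is a derivation) gives, for a time-dependent form $\omega$ with no $dt$-component,
$$d_{A_M\times TI}\,\omega=d_{A_M}\omega+dt\wedge\tfrac{\partial}{\partial t}\omega,\qquad d_{A_M\times TI}(dt\wedge\omega)=-\,dt\wedge d_{A_M}\omega.$$
The identification of $\Omega(A_M\times TI)$ with time-dependent forms on $A_M$, possibly wedged with $dt$, is precisely the time-dependence formalism recalled in the Appendix \ref{sec:timedependence}.

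Finally I would substitute. The left-hand side becomes $\Phi_t^*d_{A_N}\varepsilon+dt\wedge i_{\theta_t}^{\Phi_t}d_{A_N}\varepsilon$, while the right-hand side becomes $d_{A_M}\Phi_t^*\varepsilon+dt\wedge\bigl(\tfrac{\partial}{\partial t}\Phi_t^*\varepsilon-d_{A_M}i_{\theta_t}^{\Phi_t}\varepsilon\bigr)$. Matching the $dt$-free parts gives $\Phi_t^*\circ d_{A_N}=d_{A_M}\circ\Phi_t^*$ for every $t$, which by \S\ref{sec:LiealgasDGAs} says exactly that each $\Phi_t$ is a Lie algebroid morphism, i.e. condition $i)$; matching the $dt$-components gives $\tfrac{\partial}{\partial t}\Phi_t^*=d_{A_M}\circ i_{\theta_t}^{\Phi_t}+i_{\theta_t}^{\Phi_t}\circ d_{A_N}$, which is exactly \eqref{eq:Phimorphissm1}, i.e. condition $ii)$. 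The main obstacle is the penultimate paragraph: pinning down the decomposition of $d_{A_M\times TI}$ with the correct signs and the correct time-derivative term, since this is where the equivalence, including the precise sign in \eqref{eq:Phimorphissm1}, is produced; everything else is bookkeeping.
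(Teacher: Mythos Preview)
Your proposal is correct and follows essentially the same route as the paper: decompose $\Omega(A_M\times TI)$ into its $dt$-free and $dt$-linear parts, write down the corresponding decompositions of $\Psi^*$ and of $d_{A_M\times TI}$, and then read off conditions $i)$ and $ii)$ by matching components in $\Psi^*\circ d_{A_N}=d_{A_M\times TI}\circ\Psi^*$. The only cosmetic difference is that the paper writes the decompositions with the convention $(\cdot)\wedge dt$ rather than your $dt\wedge(\cdot)$, which shifts some intermediate signs, and it is slightly more explicit about the passage from the time-dependent differential $d_{A_M\times I}$ to the fixed-time differentials $d_{A_M}$ via Lemma~\ref{lemm:timedependentliealgmorphisms} and \eqref{prop:timedependentdifferential}.
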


\begin{proof}
The conditions $i)$ and $ii)$ are direct consequences of the fact that the cochain complex $\Omega(A_M\times TI)$ is bigraded (which in turn, follows from $A_M\times TI$ being a direct product). For clarity, we give below a more pedestrian proof.

It is easily seen that any $k$-form $\eta\in \Omega^k(A_M\times TI)$ decomposes uniquely as
$\eta=\alpha + \beta{}_\wedge dt$, where $\alpha \in \Omega^k(A_M)^I$ and $\beta \in \Omega^{k-1}(A_M)^I$. In other words, there is a canonical isomorphism:
\begin{align}
\label{eq:decompositionkforms}
\Omega^k (A_M\times TI)&=\Omega^k(A_M)^I \oplus\ \Omega^{k-1}(A_M)^I\! {}_\wedge dt.
\end{align}
Using this decomposition, one checks that both $d_{ A_M\times TI}$ and $\Psi^*$ can be decomposed in the following way:
\begin{align}
d_{A_M\times TI}&= d_{A_M\times I} + (-1)^{|\ \,|}\ddt(\ \,){}_\wedge dt, \label{eq:decompositiondAxTI} \\
\Psi^*&=\Phi^*-(-1)^{|\ \,|}i_\theta^{\Phi}{}{}_\wedge dt.\label{eq:decompositionPsiAxTI}
\end{align}
More precisely, for any $\alpha\in \Omega^k(A_M)^I$ and $\beta\in \Omega^{k-1}(A_M)^I$ we have:
\begin{align*}
d_{A_M\times TI}\bigl(\alpha + \beta_\wedge dt\bigr) 
&= d_{A_M\times I}(\alpha) + \Bigl( (-1)^k\ddt\alpha+d_{A_M\times I}\beta\Bigr){}_\wedge dt,
\end{align*}and for any $\alpha_N\in \Omega^l(A_N)$ we have:
\begin{align*}
\Psi^*\bigl(\alpha_N) &=\Phi^*(\alpha_N)-(-1)^{l} i_\theta^\Phi(\alpha_N){}_\wedge dt. 
\end{align*}
Here, $d_{A_M\times I}$ is the obvious Lie algebroid differential on $A_M\times I$ (see \eqref{prop:timedependentdifferential} for more details).
Furthermore, the operator $i_\theta^\Phi$ is given by the obvious  time-dependent version \eqref{eq:leibnizopreatordeftimedependent} of the formula \eqref{eq:leibnizopreatordef} describing $i_\theta^\Phi$ as a $\Phi^*$-derivation.

 Substituting the equations \eqref{eq:decompositiondAxTI}  and  \eqref{eq:decompositionPsiAxTI} in the condition $d_{A_M\times TI}\circ \Psi^* = \Psi^*\circ d_{A_N}$ for $\Psi$ to define a Lie algebroid morphism, and then taking into account the uniqueness of the decomposition of a form in \eqref{eq:decompositionkforms}, we deduce that $\Psi$ is a Lie algebroid morphism if and only if:
\begin{align*}
 d_{A_M\times I}\circ \Phi^* &= \Phi^*\circ  d_{A_N},\\
 \ddt\Phi^*&=  d_{A_M\times I}\circ i_{\theta}^{\Phi}+i_{\theta}^ {\Phi}\circ d_{A_N}.
\end{align*}
One concludes by noticing that both equations hold if and only if they hold  timewise. For the first equation, this follows more precisely from Lemma \ref{lemm:timedependentliealgmorphisms} and  \eqref{prop:timedependentdifferential}. Similarly for the second equation, and using obvious notations, one can see $\cV:=i_{\theta}^\Phi$ as a smooth family $\cV_t: \Omega^\bullet(A_N)\to \Omega^{\bullet+k}(A_M)$ of $\Phi^*_t$-derivations, to which the Prop. \ref{prop:derivationandtheta} associates a smooth family $\theta_t\in \Gamma(\phi_t^*A_M)$ supported by $\phi_t$.
\end{proof}
\begin{remark}
The easiest way to obtain the decomposition  \eqref{eq:decompositiondAxTI} is probably to check that it holds in local coordinates (see also \cite{BojKotovStrobl}). Note however that all the decompositions hold globally. In fact, the term $(-1)^{|\ \,|}\ddt(-)\wedge dt$ in the equation \eqref{eq:decompositionPsiAxTI} is just a convenient way to write the obvious operator $ d_{M\times TI}$ induced by $ d_{TI}$. This can be better understood using the formalism from graded geometry, where the right hand term in \eqref{eq:decompositiondAxTI} corresponds simply to the sum of the two homological vector fields $ d_{A_M}$ and $ d_{TI}$ on the product of graded manifolds $A_M[1]\times TI[1]$.
\end{remark}

\begin{remark}
In the terminology of \cite{BojKotovStrobl}, the Prop. \ref{prop:psiliealgebroidmorphism} essentially says that a natural homotopy is a homotopy of Lie algebroid morphisms by \emph{gauge transformations}. The precise relation between gauge transformations and smooth natural transformations, however, was not established in \cite{BojKotovStrobl}, neither its time-dependent version made explicit, which motivated part of this work. 
\end{remark}

The Theorem below, which essentially rephrases Prop. \ref{prop:psiliealgebroidmorphism}, summarizes the discussion of this subsection.

\begin{thm}
Let $\Phi_0,\Phi_1:A_M\to A_N$ be two Lie algebroid morphisms. We denote by  $\Phi_0^*,\Phi_1^*:(\Omega(A_N),\wedge, d_{A_N})\to (\Omega(A_M),\wedge, d_{A_M})$ the induced morphisms of DGAs.

Then $\Phi_0$ and $\Phi_1$ are naturally homotopic iff there exists:
\begin{itemize}
\item a smooth family of morphism of DGAs  $\Phi_t^*:(\Omega(A_N),\wedge, d_{A_N})\to (\Omega(A_M),\wedge, d_{A_M})$ extending $\Phi_0^*$ and $\Phi_1^*$,
\item a smooth family of $\Phi^*_t$-derivations $\cV_t:\Omega^\bullet(A_N)\to \Omega^{\bullet-1}(A_M)$ such that:
\begin{align}\label{eq:homotopycondition1}
 \ddt \Phi_t=  d_{A_M}\circ \cV_t +\circ \cV_t\circ  d_{A_N}.
 \end{align} 
\end{itemize}
\end{thm}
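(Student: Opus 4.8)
The plan is to deduce this theorem directly from Proposition \ref{prop:psiliealgebroidmorphism}, since the statement is explicitly advertised as a mere rephrasing of that result. The underlying principle is the equivalence, recalled in \S\ref{sec:LiealgasDGAs}, between Lie algebroid morphisms and morphisms of DGAs, together with the dictionary of Lemma \ref{lem:psidecvectorbundle} relating a vector bundle map $\Psi:A_M\times TI\to A_N$ to the pair $(\Phi_t,\theta_t)$. First I would unwind the definition: $\Phi_0$ and $\Phi_1$ are naturally homotopic precisely when there is a Lie algebroid morphism $\Psi:A_M\times TI\to A_N$ with $\Phi_0=\Psi\circ i_0$ and $\Phi_1=\Psi\circ i_1$. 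By Lemma \ref{lem:psidecvectorbundle}, such a $\Psi$ is the same datum as a pair consisting of a smooth family $\Phi_t:A_M\to A_N$ of vector bundle maps and a smooth family $\theta_t\in\Gamma(\phi_t^*A_N)$, via $\Psi=\Phi+\theta dt$; the boundary conditions $\Phi_0=\Psi\circ i_0$, $\Phi_1=\Psi\circ i_1$ translate exactly into the family $\Phi_t^*$ extending the given $\Phi_0^*$ and $\Phi_1^*$.

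Next I would invoke Proposition \ref{prop:psiliealgebroidmorphism} to characterize when this $\Psi$ is a \emph{Lie algebroid} morphism, as opposed to a mere vector bundle map. That proposition says $\Psi$ is a Lie algebroid morphism if and only if each $\Phi_t$ is a Lie algebroid morphism (condition i), equivalently each $\Phi_t^*$ is a morphism of DGAs) and the derivation identity \eqref{eq:Phimorphissm1} holds for all $t$. The first condition is precisely the requirement that $\Phi_t^*$ be a smooth family of morphisms of DGAs, which is the first bullet of the theorem. For the second condition, I would set $\cV_t:=i_{\theta_t}^{\Phi_t}$; by Proposition \ref{prop:derivationandtheta}, as $\theta_t$ ranges over smooth families of sections supported by $\phi_t$, the operator $\cV_t$ ranges over exactly the smooth families of $\Phi_t^*$-derivations of degree $-1$, and this correspondence is bijective. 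Under this identification, condition \eqref{eq:Phimorphissm1} becomes \eqref{eq:homotopycondition1}, giving the second bullet. Thus the equivalence follows by combining the two propositions.

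The main subtlety, rather than a genuine obstacle, is bookkeeping the smooth time-dependence and confirming that the bijection of Proposition \ref{prop:derivationandtheta} is compatible with smooth families, so that a smooth family of derivations $\cV_t$ corresponds to a smooth family of sections $\theta_t$ and vice versa; this is handled by the time-dependent versions of the formulas discussed in \S\ref{sec:timedependence} and already used in the proof of Proposition \ref{prop:psiliealgebroidmorphism}. One should also note the sign: equation \eqref{eq:homotopycondition1} as printed reads $d_{A_M}\circ\cV_t+\circ\cV_t\circ d_{A_N}$, which I read as $d_{A_M}\circ\cV_t+\cV_t\circ d_{A_N}$, matching \eqref{eq:Phimorphissm1} exactly once $\cV_t=i_{\theta_t}^{\Phi_t}$. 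With these identifications in place the proof is a direct translation, and indeed one may simply write:

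\begin{proof}
By Lemma \ref{lem:psidecvectorbundle}, giving a vector bundle map $\Psi:A_M\times TI\to A_N$ is equivalent to giving a pair $(\Phi_t,\theta_t)$ as above, with $\Psi=\Phi+\theta dt$; the conditions $\Phi_0=\Psi\circ i_0$ and $\Phi_1=\Psi\circ i_1$ amount to the family $\Phi_t$ extending $\Phi_0,\Phi_1$. By definition, $\Phi_0$ and $\Phi_1$ are naturally homotopic if and only if such a $\Psi$ exists and is a Lie algebroid morphism. By Proposition \ref{prop:psiliealgebroidmorphism}, the latter holds if and only if each $\Phi_t$ is a Lie algebroid morphism, equivalently each $\Phi_t^*$ is a morphism of DGAs, and the identity \eqref{eq:Phimorphissm1} holds for all $t$. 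Setting $\cV_t:=i_{\theta_t}^{\Phi_t}$ and invoking the bijection of Proposition \ref{prop:derivationandtheta} between smooth families of sections $\theta_t$ supported by $\phi_t$ and smooth families of degree $-1$ $\Phi_t^*$-derivations, condition \eqref{eq:Phimorphissm1} becomes exactly \eqref{eq:homotopycondition1}. This proves the stated equivalence.
\end{proof}
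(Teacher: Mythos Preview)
Your proposal is correct and follows exactly the route the paper intends: the theorem is explicitly presented as a rephrasing of Proposition~\ref{prop:psiliealgebroidmorphism}, and you have simply spelled out the dictionary via Lemma~\ref{lem:psidecvectorbundle} and Proposition~\ref{prop:derivationandtheta} (together with its time-dependent version in \S\ref{sec:timedependence}) that makes this rephrasing precise. The paper itself gives no separate proof, so your argument is the natural and expected one.
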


\begin{remark}[Flowing a Lie algebroid map by a time-dependent section]\label{rem:flowing}

Given a Lie algebroid morphism $\Phi_0:A_M\to A_N$, together with a time-dependent section $\hat{\theta}_t\in \Gamma(A_N)$ of $A_N$,  it is possible to flow $\Phi_0$ into a time-dependent family of Lie algebroid morphisms. 

In order to do this, we define a family of Lie algebroid mosphisms:
$$\Phi_t:=\Phi_{t,0}^{\hat{\theta}}\circ \Phi_0,$$
where $\Phi_{t,0}^{\hat{\theta}}:A_N\to A_N$ denotes the flow induced by the time-dependent derivation $D_{\hat{\theta}_t}:\Gamma(A_N)\to \Gamma(A_N),\ a \mapsto [\hat{\theta}_t,a]_{A_N}$ of $A_N$ (see for instance the Apendix in \cite{CrFe2} for how the flow of a derivation is obtained). Here, of course, we need to assume that the flow of the time-dependent vector field $\sharp_{A_N}(\hat{\theta}_t)$ defined on $N$ is defined up to time $t=1$. 

If we denote by $\phi_{t,0}^{\hat{\theta}_t}:N\to N$ the base map of $\Phi_{t,0}^{\hat{\theta}_t}$, and then define $\theta_t(m):=\hat{\theta}_t(\phi_t(m))$, then it is easily checked that the condition \eqref{eq:Phimorphissm1} in the Proposition \ref{prop:psiliealgebroidmorphism} is satisfied, so that $\Psi:=\Phi+\theta dt$ defines a natural homotopy between $\Phi_0$ and $\Phi_1$.

Notice yet how restrictive this construction is, in particular due to the fact that the base maps $\Phi_t:M\to N$ are obtained by composing $\phi_0$ with a diffeomorphism of $N$ (namely, the flow of the time-dependent vector field $\sharp_{A_N}\hat{\theta}$). This procedure \emph{does not} allow to perform and integrate basic homotopies such as those in \S\ref{sec:homotopiesofsmoothmaps}. For instance if $A_M=A_N=T\RR^n$, and $\Phi_t(v)=d\phi_t(v)$ where $\phi_t(v):=tv, v\in \RR^n$ is the usual contraction, then it is not possible to produce a time-dependent vector field $\hat{\theta}_t$ as above, simply due to the fact that for $t=1$, $\phi_1$ is not a diffeomorphism. It is however easy to fit $\Phi_t$ into a natural homotopy as explained in \S\ref{sec:homotopiesofsmoothmaps}. Besides the completeness issues, this raises serious restrictions from the homotopic point of view.
\end{remark}

\subsection{Operations on natural homotopies}

Similarly as natural transformations, natural homotopies can be composed both horizontally and vertically (up to minor smoothness issues). In this section we briefly describe those operations.

\begin{defi} Given natural homotopies $\Psi=\Phi+\theta dt: A_M\times TI\longrightarrow A_N$ and $\Psi^\prime=\Phi^\prime+\theta^\prime dt:A_N\times TI\longrightarrow A_P$, we define the \emph{horizontal composition of $\Psi^\prime$ and $\Psi$} as the natural homotopy:
\begin{align*}
\Psi^\prime\circ_H \Psi=\Phi^H +\theta^H dt:A_M\times TI\longrightarrow A_P
\end{align*}
whose components $\Phi^H$ and $\theta^H$ are given as follows:
 \begin{align*}
\left\{\begin{array}{lcl}
\Phi^H=\Phi^\prime\circ \Phi,\\
\theta^H = \Phi^\prime\circ \theta+\theta^\prime \circ \phi,
\end{array}\right.
\end{align*}
where $\phi:M\times I\to N$ denotes the base map of $\Phi:A_M\times I\to A_N$. 
\end{defi}

Of course, the previous equalities mean that:
$\Phi^H_t=\Phi^\prime_t\circ \Phi_t$ and $ \theta_t^H=\Phi_t^\prime\circ\theta_t+\theta_t^\prime\circ\phi_t,
$ for every $t\in I$.

\begin{remark}
In fact, the horizontal composition can be understood as a usual composition of maps, as follows. Any Lie algebroid $\Psi:A_M\times TI\to A_N$ gives rise to a Lie algebroid morphism $\widehat{\Psi}:A_M\times TI\to A_N\times TI$ that commutes with the projections on the $TI$ factor (in other words, by setting $\widehat{\Psi}(a,l):=(\Psi(a),l)$) and reciprocally. The horizontal composition is then defined so that $\widehat{\Psi'\circ_H\Psi}=\widehat{\Psi'}\circ\widehat{\Psi}$.

This makes it straightforward to check that $\Psi'\circ_H\Psi$ is indeed a Lie algebroid morphism. This also suggests that $A_M\times TI$ should really be thought of as a trivial fibration $A_M\times TI\to TI$, and a natural homotopy as a morphism between two such fibrations, covering the identity on $TI$.
\end{remark}

The vertical composition is obtained by a concatenation-type procedure, in the following way.

\begin{defi}
Given natural homotopies $\Psi=\Phi+\theta dt:A_M\times TI\longrightarrow A_N$ and $\Psi^\prime=\Phi^\prime+\theta^\prime dt: A_M\times TI\longrightarrow A_N$ such that $\Phi_1=\Phi'_0$, the \emph{vertical composition of $\Psi^\prime$ and $\Psi$} is defined as the natural homotopy 
\begin{align*}
\Psi^\prime\circ_V \Psi=\Phi^V+\theta^V dt:A_M\times TI\longrightarrow A_N
\end{align*}
whose components $\Phi^V$ and $\theta^V$ are given as follows:
\begin{align*}
\Phi^V_t&=\left\{\begin{array}{lcl}
\Phi_{2t} & \textrm{if}& 0\leq t\leq 1/2,\\
\Phi_{2t-1}^\prime & \textrm{if}& 1/2\leq t\leq 1,
\end{array}\right.
\end{align*} 
and:
\begin{align*}
\theta^V_t&=\left\{\begin{array}{lcl}
2\theta_{2t} & \textrm{if}& 0\leq t\leq 1/2,\\
2\theta_{2t-1}^\prime & \textrm{if}& 1/2\leq t\leq 1.
\end{array}\right.
\end{align*}
\end{defi}

\begin{remark}
Notice the factors $2$ appearing in the formula for $\theta^V$. In fact, these are due to the reparametrization in $t$. More precisely, the vertical composition is such that: 
\begin{align*}
\Psi'\circ_V\Psi|_{A_M\times[0,1/2]}&=\Psi\circ (\mathrm{id}_{A_M}\times d\sigma_1),\\ \Psi'\circ_V\Psi|_{A_M\times[1/2,1]}&=\Psi'\circ (\mathrm{id}_{A_M}\times d\sigma_2),
\end{align*}
 where $\sigma_1:[0,1/2]\to I$ is given by $\sigma_1(t):=2t$, and $d\sigma_1:T[0,1/2]\to TI$ denotes its differential, while  $\sigma_2:[1/2,1]\to I$ is given by $\sigma_2(t):=2t-1$, and $d\sigma_2:T[0,1/2]\to TI$ denotes its differential. It is when taking the differentials of $\sigma_1$ and $\sigma_2$ that appear the factors $2$. Also, this should make it clear that $\Psi'\circ_V\Psi$ is a Lie algebroid morphism.
\end{remark}

\begin{remark}
It should also be clear that the vertical composition $\Psi'\circ_V\Psi$ is \emph{not} smooth in general. However, provided $\Phi_1=\Phi'_0$, it is always possible to re-parametrize  the $t$-coordinate by using a cut-off function to re-parametrize, so as to obtain a smooth vertical composition. Details are not relevant for this work, and will be left to the reader  (see \cite{CrFe2, BZ, BO} where similar constructions are carried through).
\end{remark}

Notice also that, unlike the vertical composition of natural transformations, the vertical composition of natural homotopies is \emph{not} associative (not strictly, at least). The horizontal composition is, and furthermore, horizontal and vertical compositions are compatible in the following sense:
\begin{align*}
(\Psi_1^\prime\circ_H \Psi_1)\circ_V (\Psi_0^\prime\circ_H \Psi_0)=(\Psi_1^\prime\circ_V \Psi_0)\circ_H (\Psi_1\circ_V \Psi_0). 
\end{align*}
whenever all the above terms are well-defined. 
\section{Integration of natural homotopies}\label{sec:integration}

We now have enough tools in had in order to prove the main result of this work, which is to provide a integration procedure, in order to obtain smooth natural transformations.

\begin{thm}\label{thm:main}
Let $A_M$ and $A_N$ be Lie algebroids over smooth manifolds $M$ and $N$ respectively, together with a smooth family $\Phi_t:A_M\to A_N$ of Lie algebroid morphisms parametrized by $t\in [0,1]$. 

We denote by $\phi_t: M\to N$ the  underlying family of smooth maps, and by 
$$\Phi_t^*:\bigl(\,\Omega^\bullet(A_N), \wedge{},  d_{A_N}\,\bigr)\longrightarrow  \bigl(\,\Omega^\bullet(A_M), \wedge{},  d_{A_M}\bigr)$$ the smooth family of morphisms of DGAs induced by $\Phi_t$.

Assume that there exists a smooth family $\theta_t\in \Gamma(\phi_t^*A_N)$ of sections of $A_N$ supported by $\phi_t$ that satisfies the following condition:
\begin{equation}\label{eq:conditionphit}
\ddt\Phi_t^*=  d_{A_M}\circ i_{\theta_t}^{\Phi_t}+i^{\Phi_t}_{\theta_t}\circ d_{A_N},
\end{equation}
where $i_{\theta_t}^{\Phi_t}:\Omega^\bullet(A_N)\to \Omega^{\bullet-1}(A_M)$ is the 
 family of maps obtained  by contraction with $\theta_t$, namely:
\begin{equation}
\bigl\langle i_{\theta_t}^{\Phi_t} \alpha_N,v_1{}_\wedge\dots{}_\wedge v_{p-1}\bigr\rangle=\bigl\langle\alpha_N, \theta_t(m){}_\wedge \Phi_t(v_1){}_\wedge\dots{}_\wedge\Phi_t(v_{n-1})\bigr\rangle.
\end{equation}
for any $v_1,\dots,v_{n-1}\in (A_M)_m,$ and $m\in M$.

Then, the following assertions hold:
\begin{enumerate}[i)]
	\item The morphisms $\Phi_0^*$ and $\Phi_1^*$ are chain homotopic.
	\item Whenever $A_M$ and $A_N$ are integrable Lie algebroids, the assignment:
	\begin{equation}\label{eq:nattranspath}
	m\in M\mapsto \eta(m):=[\theta(m)dt]_{A_N}\in \G(A_N)
	\end{equation}
	defines a smooth natural transformation $\eta: F_0\Rightarrow F_1$, where
	$$F_0,F_1: \G(A_M)\to \G(A_N)$$
	denote the Lie groupoid morphisms integrating $\Phi_0$ and $\Phi_1$ respectively.
	
	Here, we denoted by $[\theta(m)dt]_{A_N}$ the $A_N$-homotopy class of the $A_N$-path $\theta(m)dt:TI\to A_N$.
\end{enumerate}
\end{thm}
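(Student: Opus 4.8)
The plan is to treat the two items separately: item (i) follows from a direct integration of the homotopy relation \eqref{eq:conditionphit}, while item (ii) follows by integrating the Lie algebroid morphism $\Psi=\Phi+\theta dt$ and then identifying the natural transformation it induces.

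For item (i), I would set $h:=\int_0^1 i_{\theta_t}^{\Phi_t}\,dt:\Omega^\bullet(A_N)\to\Omega^{\bullet-1}(A_M)$, which is a well-defined degree $-1$ operator by smoothness of the family in $t$. Since $d_{A_M}$ and $d_{A_N}$ are independent of $t$, they pull out of the integral, so integrating \eqref{eq:conditionphit} over $[0,1]$ gives
\begin{equation*}
\Phi_1^*-\Phi_0^*=\int_0^1\ddt\Phi_t^*\,dt=d_{A_M}\circ h+h\circ d_{A_N},
\end{equation*}
which is exactly the assertion that $\Phi_0^*$ and $\Phi_1^*$ are chain homotopic.

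For item (ii), the key observation is that the hypotheses are precisely those of Proposition \ref{prop:psiliealgebroidmorphism}: each $\Phi_t$ is a Lie algebroid morphism and \eqref{eq:conditionphit} coincides with \eqref{eq:Phimorphissm1}, so $\Psi:=\Phi+\theta dt:A_M\times TI\to A_N$ of \eqref{eq:Psi=phi+theta} is a Lie algebroid morphism, i.e. an (infinitesimal) natural homotopy. Assuming $A_M$ and $A_N$ integrable, I would integrate it: the product $A_M\times TI$ is again integrable, with source-simply-connected integration $\G(A_M\times TI)\cong\G(A_M)\times\bI$, where $\bI$ is source-simply-connected since it is the pair groupoid over the contractible interval. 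Applying the integration functor then produces a Lie groupoid morphism $F:=\G(\Psi):\G(A_M)\times\bI\to\G(A_N)$ with $\Lie(F)=\Psi$, that is, a natural homotopy in the sense of Definition \ref{def:nauralhomotopygroupoid} between $F_0$ and $F_1$. By Proposition \ref{prop:naturaltandhomotopiessmooth} (the groupoid-level correspondence between natural homotopies and families of natural transformations), $F$ determines a natural transformation $\eta:F_0\Rightarrow F_1$ given by $\eta_m=F(\id_m\times\tau_{1,0})$; this is automatically smooth, since $m\mapsto\id_m\times\tau_{1,0}$ is a smooth section into $\G(A_M)\times\bI$ and $F$ is smooth, and its source and target are $\phi_0(m)$ and $\phi_1(m)$ as required.

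It then remains to identify $\eta_m$ with $[\theta(m)dt]_{A_N}$, and here I would use that $\G(\Psi)$ acts on $A$\ndash homotopy classes by post\ndash composing $A$-paths with $\Psi$. Representing the arrow $\id_m\times\tau_{1,0}$ by the $(A_M\times TI)$-path whose $A_M$-component is the constant path $0_m$ and whose $TI$-component is $\partial t$, and evaluating $\Psi=\Phi+\theta dt$ along it, the $A_M$-part vanishes while the $TI$-part yields exactly the $A_N$-path $s\mapsto\theta_s(m)$ over the base path $s\mapsto\phi_s(m)$; that is, $F(\id_m\times\tau_{1,0})=[\theta(m)dt]_{A_N}=\eta(m)$. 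The main obstacle is precisely this last bookkeeping: making rigorous the identification $\G(A_M\times TI)\cong\G(A_M)\times\bI$, the description of $\G(\Psi)$ as post\ndash composition on $A$-paths, and the correct choice of representative for $\id_m\times\tau_{1,0}$. Once these are in place, smoothness and naturality follow formally from $F$ being a genuine Lie groupoid morphism.
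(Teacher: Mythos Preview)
Your proposal is correct and follows essentially the same route as the paper: for (i) you integrate the relation \eqref{eq:conditionphit} to produce the chain homotopy $\Theta=\int_0^1 i_{\theta_t}^{\Phi_t}\,dt$, and for (ii) you invoke Proposition~\ref{prop:psiliealgebroidmorphism} to assemble $\Psi=\Phi+\theta dt$, integrate it to a groupoid morphism $F:\G(A_M)\times\bI\to\G(A_N)$, read off $\eta$ via Proposition~\ref{prop:naturaltandhomotopiessmooth}, and compute $\eta(m)$ by pushing the representative $(0^{A_M}_m,\partial t)dt$ of $\id_m\times\tau_{1,0}$ through $\Psi$. The only difference is cosmetic: you make explicit the identification $\G(A_M\times TI)\cong\G(A_M)\times\bI$ and the smoothness of $\eta$, which the paper leaves implicit.
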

\begin{proof}
The proof of the first assertion  is similar to the homotopy invariance of smooth maps in De Rham cohomology (see, for instance \cite[Prop.\,11.5, p.\,276]{Lee}). More precisely, given $\alpha \in \Omega(A_N)$, we check that using the condition \eqref{eq:conditionphit} we have:
\begin{align*}
\Phi_1^*\alpha-\Phi_0^*\alpha
&=\int_0^1\ddt\Phi_t^*(\alpha) dt\\
&=d_{A_M} \int_0^1 i_{\theta_t}^{\Phi_t}(\alpha) dt-\int_0^1 i_{\theta_t}^{\Phi_t}( d_{A_N}\alpha)dt
\end{align*}
so that the operator $\Theta:\Omega^\bullet(A_N)\to \Omega^{\bullet-1}(A_M)$ given by:
$$\Theta(\alpha):=\int_0^1 i_{\theta_t}^{\Phi_t}(\alpha)dt$$
defines indeed a chain homotopy between $\Phi^*_0$ and $\Phi^*_1$. Here, we used the fact that integration with respect to the time variable  commutes with the Lie alegbroid differentials (see \eqref{integrationoperator} and \eqref{eq:differentialcommuteswithint} for more details).
 
In order to prove the second assertion, we use the Proposition~ \ref{prop:psiliealgebroidmorphism}, according to which $\Psi:=\Phi+\theta dt$ defines a Lie algebroid morphism $\Psi:A_M\times TI \to A_N$
such that $\Phi_0=\Psi\circ i_0$ and $\Phi_1=\Psi\circ i_1$. Then, by applying the integration functor to  $\Psi$, we obtain a Lie groupoid morphism:
$$F:\G(A_M)\times \bI \to \G(A_N).$$
such that $F_0=F\circ i_0$ and $F_1=F\circ i_1$. Therefore there exist natural transformations  $\eta_t:F_0\Rightarrow F_t$ by the Proposition \ref{prop:naturaltandhomotopiessmooth}, in particular for $t=1$.

Finally, the explicit formula for the the natural transformation $\eta$ is a consequence of the formula of $\eta$ in the proof of the Proposition \ref{prop:naturaltandhomotopiessmooth}. Namely, it is easily seen that  the element $\id_x\times \tau_{1,0}$, as a homotopy class of $A_M\times TI$-path, is represented by the following path:
\begin{align*}
(0^{A}_x\times \partial t)dt:TI &\to A_M\times TI \\
\partial t&\mapsto \bigl(0^{A_M}_x,\partial t\bigr).
\end{align*} 
By the definition of the integration functor, 
the image of $\id_x\times \tau_{1,0}$ by $F$ is given by the homotopy class of the  $A_N$-path  $\Psi\circ (0^{A}_x\times \partial t)dt$. Then using the formula  $\Phi=\Phi+\theta dt$, we successively obtain:
\begin{align*}
\eta(x)&=F(\,\id_x\times \tau_{1,0}\,)\\
       &=F\bigl(\,[(0^{A}_x\times \partial t)dt]_{A_M} \bigr)\\
       &=\bigl [\Psi\circ (0^{A}_x\times \partial t)dt \bigr]_{A_N}\\
       &=[\theta(x,-)dt]_{A_N},      
\end{align*}
which completes the proof.
\end{proof}


\begin{remark}
It is easily checked that the integration procedure described in the previous Theorem commutes with the vertical and horizontal composition defined in section 
\end{remark}

\begin{remark}\label{rem:bissections}
One might bring the argument in Remark \ref{rem:flowing} (to  which the notations below refer) a bit further.

 By considering only homotopies induced by time-dependent sections $\hat{\theta}_t\in \Gamma(A_N)$ one is essentially considering the Lie algebroid $A_N$ through the prism of its infinite dimensional Lie algebra of sections. Say $A_N$ is integrable, then the "Lie group" associated with $\Gamma(A_N)$ is, at least heuristically, the group of bissections of $\G(A_N)$, which can be argued by looking at the "tangent space" at identities of the group of bissections. In fact,  it is not hard to see that the map $\Phi_{1,0}^{\hat{\theta}}:A_N\to A_N$ defined in Remark  \ref{rem:flowing} integrates to a map $\G(A_N)\to \G(A_N)$ which is obtained by conjugation in $\G(A_N)$  by a bissection, the which is obtained from  $\hat{\theta}$ (the precise construction was detailed in the Apendix of \cite{CrFe2}). 
 
 In other words, by only flowing along time-dependent sections, we essentially replaced a Lie algebroid with its Lie algebra of sections, and its corresponding Lie groupoid by its group of bissections, negating the very "-oid" nature of these structures (essentially, we set ourselves in the situation of the next Example \ref{ex:homotopy-A_N-Lie-algebras}). For higher degrees, the result is due to Vaintrob \cite{Vain}.
 
 While the usefulness of bissections is, in certain instances, undeniable, it is certainly surprising that, in the context of Lie groupoids, smooth natural transformations were so overlooked in the literature in the favour of bissections.
\end{remark}

\begin{ex}\label{ex:homotopy-A_N-Lie-algebras}
The only case where the point made in the previous Remark \ref{rem:bissections} is not relevant, which is the one of Lie algebras. Indeed, assume that $A_M$ and $A_N$ are Lie algebras, $A_M=\g$ and $A_N=\h$. We denote by $G$ and $H$ the source simply-connected Lie groups integrating $\g$ and $\h$ respectively. Given Lie algebras morphisms  $\Phi_0:\g\to \h$ and $\Phi_1:\g\to \h$, a natural homotopy $\Psi=\Phi+\theta dt$ simply involves a family $\theta_t$ of elements of $\mathfrak{h}$, and the corresponding $\mathfrak{h}$-path in Theorem \ref{thm:main} yields an element $h:=[\theta]_{\mathfrak{h}}$ in $H$.

In other words, in the case of Lie algebras, the Theorem \ref{thm:main} states that the Lie algebra morphisms $\Phi_0,\Phi_1$ are naturally homotopic \emph{iff} there exists $h\in H$ such that $\Phi_1=\text{Ad}_h\circ \Phi_0$.
\end{ex}

\section{Examples and Applications}
\subsection{Homotopies of smooth maps}\label{sec:homotopiesofsmoothmaps}
Our setting can be applied to the usual notion of homotopy between smooth maps in the following way

\begin{prop}
	Let $M$ and $N$ be smooth manifolds and $\phi_i:M\to N$, $t=0,1$ two smooth maps. Then smooth homotopies $\psi:M\times I \to N$ are the same thing are natural homotopies $\Psi:TM\times TI\to TN$.
\end{prop}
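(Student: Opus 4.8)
The plan is to establish the claimed bijection in both directions, using the structural results already developed for tangent bundles. Recall that for a manifold $M$, the tangent bundle $TM$ carries its canonical Lie algebroid structure with anchor the identity and bracket the usual Lie bracket of vector fields, and that a vector bundle map $\Psi : TM \to TN$ is automatically of the form $d\psi$ for the induced map on bases precisely when it is a Lie algebroid morphism. This is the key simplification: \emph{for tangent algebroids, Lie algebroid morphisms are exactly the differentials of smooth maps}. I would first record this fact explicitly, since it is the engine of the whole argument.

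Given this, the forward direction goes as follows. Starting from a smooth homotopy $\psi : M \times I \to N$, I would take its differential $d\psi : T(M \times I) \to TN$, and use the canonical identification $T(M\times I) = TM \times TI$ to read $d\psi$ as a vector bundle map $\Psi : TM \times TI \to TN$. Because $d\psi$ is the differential of a genuine smooth map, it is automatically a Lie algebroid morphism, hence (by the earlier discussion, via Lemma \ref{lem:psidecvectorbundle} and Proposition \ref{prop:psiliealgebroidmorphism}) a natural homotopy $\Psi = \Phi + \theta\, dt$, whose components are $\Phi_t = d\phi_t$ and $\theta_t = \tfrac{\partial \psi}{\partial t}$, i.e.\ the velocity field of the homotopy. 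One then checks that the endpoint conditions $\Phi_0 = \Psi \circ i_0$ and $\Phi_1 = \Psi \circ i_1$ translate precisely into $\psi(\cdot,0) = \phi_0$ and $\psi(\cdot,1) = \phi_1$.

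Conversely, given a natural homotopy $\Psi : TM \times TI \to TN$, I would view it as a Lie algebroid morphism out of $T(M\times I)$ via the same identification. Since the source and target are both tangent algebroids, the characterization of morphisms forces $\Psi = d\psi$ for a unique smooth map $\psi : M \times I \to N$, and the endpoint conditions again identify $\psi$ as a homotopy from $\phi_0$ to $\phi_1$. The two assignments $\psi \mapsto d\psi$ and $\Psi \mapsto \psi$ are manifestly inverse to one another, giving the claimed bijection.

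The only genuine subtlety, which I would flag as the main point to get right, is the justification that a vector bundle morphism $TM \times TI \to TN$ is a Lie algebroid morphism \emph{if and only if} it is the differential of a smooth map; the ``only if'' is immediate, but the ``if'' direction is what makes natural homotopies of tangent algebroids collapse onto ordinary smooth homotopies, and it relies on the fact that the identity anchor rigidifies the base map together with its fibrewise action into a single tangent map. I would state this as a short lemma (or cite it as the well-known characterization of morphisms of tangent algebroids) rather than reprove it, since everything else is a routine unwinding of the decomposition $\Psi = \Phi + \theta\,dt$ from Lemma \ref{lem:psidecvectorbundle}.
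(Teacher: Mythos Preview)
Your approach is exactly the paper's: both rest on the well-known characterization that a vector bundle map between tangent algebroids is a Lie algebroid morphism if and only if it is the differential of its base map, and you correctly identify $T(M\times I)\cong TM\times TI$ as the bridge. The paper's own proof is a single sentence invoking this fact; your elaboration into the two directions and the decomposition $\Psi=\Phi+\theta\,dt$ with $\Phi_t=d\phi_t$, $\theta_t=\partial\psi/\partial t$ is correct and in fact anticipates the computation used in the next proposition.

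One small expository slip: you have the ``if'' and ``only if'' directions swapped in your final paragraph. The \emph{immediate} direction is ``if'' (any differential $d\psi$ is a Lie algebroid morphism, by the chain rule); the direction that does the real work---forcing a Lie algebroid morphism $\Psi:T(M\times I)\to TN$ to equal $d\psi$---is the ``only if'', and it is this one that uses the identity anchor (anchor-compatibility gives $\sharp_{TN}\circ\Psi=d\psi\circ\sharp_{T(M\times I)}$, hence $\Psi=d\psi$). This does not affect the correctness of your argument, only the labelling.
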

\begin{proof} This follows from the well-known fact that a vector bundle $\Psi:TM\times TI\to TN$ covering $\phi:M\times I\to N$ is a Lie algebroid morphism if and only $\Psi= d \psi$.
\end{proof}

In that case, the main Theorem  \ref{thm:main} reads as follows.

\begin{prop}\label{prop:fudamentalgroupoidfunctors}
Given smooth manifolds $M,N$ and a smooth homotopy $\phi:M\times I\to N$ between $\phi_0:M\to N$ and $\phi_1:M\to N$, the map $\eta: M\to \Pi(N)$ given by:
$$\eta(m):=[\gamma^{\phi}_{(m)}],$$
where $[\gamma^{\phi}_{(m)}]$ denote the homotopy class of the path $\gamma^{\phi}_{(m)}:I\to N, \ t\mapsto \phi_t(m)$ defines a smooth natural transformation:
$$\begin{gathered}\SelectTips{}{}
 \xymatrix@R=30pt@C=30pt{
 	\Pi(M)
 	\ar@/_1pc/[r]_{F_1}^{}="a"
 	\ar@/^1pc/[r]^{F_0}_{}="b"
 	& \Pi(N)
 	\ar@{=>}^{\eta}"a";"b"}
 \end{gathered}
$$
Here $\Pi(M)\tto M$ and $\Pi(N)\tto N$ denote the fundamental groupoids and $M$ and $N$, respectively, and $F_0,F_t:\Pi(M)\to \Pi(N)$ the Lie groupoid morphisms induced by $\phi_0$ and $\phi_1$ respectively.
\end{prop}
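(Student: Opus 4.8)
The plan is to reduce the statement to the main Theorem \ref{thm:main} by carefully identifying all the objects involved. First I would set up the correspondence between the classical and the Lie-algebroid-theoretic data. By the preceding Proposition, a smooth homotopy $\psi:M\times I\to N$ between $\phi_0$ and $\phi_1$ is the same as a natural homotopy $\Psi=d\psi:TM\times TI\to TN$; here the Lie algebroids are the tangent bundles $A_M=TM$ and $A_N=TN$, which are always integrable, with $\G(TM)=\Pi(M)$ and $\G(TN)=\Pi(N)$ the fundamental groupoids. The induced Lie groupoid morphisms integrating $\Phi_0=d\phi_0$ and $\Phi_1=d\phi_1$ are then exactly the functors $F_0,F_1:\Pi(M)\to \Pi(N)$ obtained by pushing paths forward along $\phi_0$ and $\phi_1$.

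Next I would extract the section $\theta_t$ from the homotopy. Applying Lemma \ref{lem:psidecvectorbundle} to $\Psi=d\psi$, we write $\Psi=\Phi+\theta\,dt$, where $\Phi_t=d\phi_t$ and where $\theta_t\in\Gamma(\phi_t^*TN)$ is the section supported by $\phi_t$ obtained by contracting $d\psi$ with $\partial t$; concretely, $\theta_t(m)=\frac{\partial}{\partial t}\psi(m,t)\in T_{\phi_t(m)}N$ is the velocity vector of the curve $t\mapsto \psi(m,t)$. Since $\Psi$ is a Lie algebroid morphism, Proposition \ref{prop:psiliealgebroidmorphism} guarantees that the hypothesis \eqref{eq:conditionphit} of Theorem \ref{thm:main} holds automatically. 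Thus Theorem \ref{thm:main} applies and yields a smooth natural transformation $\eta:F_0\Rightarrow F_1$ given by $\eta(m)=[\theta(m)\,dt]_{TN}$.

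It then remains to unwind what the $A_N$-homotopy class $[\theta(m)\,dt]_{TN}$ means when $A_N=TN$. For the tangent Lie algebroid, a $TN$-path is literally a path in $N$ (its base curve), $A_N$-homotopy is ordinary homotopy with fixed endpoints, and the integrating groupoid $\Pi(N)$ is the fundamental groupoid, whose arrows are homotopy classes of paths. Under this identification the $TN$-path $\theta(m)\,dt$ has base curve precisely $\gamma^{\phi}_{(m)}:t\mapsto \phi_t(m)=\psi(m,t)$, so $[\theta(m)\,dt]_{TN}=[\gamma^{\phi}_{(m)}]$ as claimed. I would verify that $F_0,F_1$ are indeed the path-pushforward functors and that $\eta$ realizes the expected naturality square, which in the fundamental-groupoid picture is the standard statement that a homotopy between maps induces, via the traces $\gamma^{\phi}_{(m)}$, a natural transformation between the induced functors.

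The main obstacle, such as it is, lies in checking the identification of the $TN$-homotopy class of $\theta(m)\,dt$ with the classical homotopy class $[\gamma^{\phi}_{(m)}]$, i.e. in carefully matching the abstract integration functor $\G(-)$ applied to $TN$ with the concrete fundamental groupoid $\Pi(N)$. This is essentially the well-known fact that $\G(TN)=\Pi(N)$ together with the description of $TN$-paths and their homotopies; once this dictionary is in place the proposition follows immediately from Theorem \ref{thm:main}, so I would state the dictionary explicitly and then simply specialize the main theorem.
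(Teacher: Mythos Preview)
Your proposal is correct and follows essentially the same approach as the paper: identify $\Psi=d\phi$ as a natural homotopy, read off $\Phi_t=d\phi_t$ and $\theta_t(m)=\frac{\partial\phi}{\partial t}(m,t)$ from the decomposition, and then apply Theorem~\ref{thm:main}. Your write-up is in fact more detailed than the paper's, which simply states the decomposition and invokes the main theorem without spelling out the dictionary between $TN$-paths and ordinary paths.
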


\begin{proof}
Given a smooth homotopy $\phi:M\times I\to N$ between $\phi_0:M\to N$ and $\phi_1:M\to N$, we obtain a smooth family of Lie algebroid morphisms $\Phi_t:=d\phi_t: TM\to TN$ that integrate to a  family of Lie groupoid morphisms:
$$F_t:\Pi(M)\to \Pi(N).$$
 Obviously, the differential $\Psi:= d \phi:TM\times TI \to TN$ defines a natural homotopy between $\Phi_0$ and $\Phi_1$. Furthermore, $\Phi$ is clearly of the form:
 $$\Phi_{x,t}:= d \phi_t+ \frac{\partial \phi}{\partial t} d t.$$
In other words in that case, we have:
$$\theta(m):=\frac{\partial \phi}{\partial t}.$$
The result then easily follows by applying our main Theorem. \ref{thm:main}.
\end{proof}

\begin{remark}
One may wonder what happens if we replace the fundamental groupoids $\Pi(M)$ and $\Pi(N)$ by the pair groupoids $M\times M\tto M$ and $N\times N \tto N$, that also integrate $TM$ and $TN$, respectively. However, by doing that, the Proposition \ref{prop:fudamentalgroupoidfunctors} becomes void, indeed it is easily checked that for \emph{any} Lie groupoid $\G\tto M$, and \emph{any} couple of Lie groupoid morphisms $F_0,F_1:\G\to N\times N$, there exists a unique natural isomorphism $\eta:F_0\Rightarrow F_1$ (it is  given by $\eta(x)=(F_0(x),F_1(x))\in N\times N$, for any $x\in M$).
\end{remark}

%


\subsection{Deformations retracts of Lie algebroids}
In the sequel, we consider a Lie algebroid  $A_M\to M$ together with  a \textbf{Lie subalgebroid} $A_R\to R$ (see \cite{MMrcun}). Recall that this means that we have an injective Lie algebroid morphism
 $$i_{A_R}:A_R\hookrightarrow A_M,$$ 
 such that the corresponding base map $i_R:R\hookrightarrow M$ is an embedding.
\begin{defi}\label{def.deformationretraction}
Given a Lie subalgebroid $i_{A_R}:A_R\hookrightarrow A_M,$ a  \textbf{(weak) deformation retraction} from $A_M$ to $A_R$ is a natural homotopy between $\Phi_0:=\id_{A_M}$ and a retraction from $A_M$ to $A_R$.
\end{defi}
More precisely, a deformation retraction is given by a Lie algebroid morphism $\Psi=\Phi+\theta dt:A_M\times TI \to A_M$, such that: 
 \begin{enumerate}[i)]
 \item for $t=0$, we have $\Phi_0=\id_{A_M}$, 
 \item for $t=1$, the map $\Phi_1:A_M\to A_M$ factor through a Lie algebroid morphism $\check{\Phi}_1:A_M\to A_R$ which is a retraction, namely we impose that:
 \begin{align*}
  i_{A_R}\circ \check{\Phi}_1&=\Phi_1,\\
  \check{\Phi}_1\circ i_{A_R}&=\id_{A_R}.
 \end{align*}
 \end{enumerate}
   The above commutation relations are illustrated by the  diagram:
   $$\SelectTips{}{}
   \xymatrix@R=25pt@C=25pt{                   A_M \ar[r]^{\Phi_1}
                                    \ar[dr]^-*!/d2pt/{\labelstyle\check{\Phi}_1} & A_M \\
              \overset{\ }{A_R}\ar@{^{(}->}[u]^{i_{A_R}}
                \ar[r]^<<<<{\id_{A_R}}&              \overset{\ }{A_R}\ar@{^{(}->}[u]_{i_{A_R}}
              }
   $$

We say that $A_R$ is a \textbf{deformation retract} of $A_M$ if there exists a deformation retraction from $A_M$ to $A_R$.

\begin{prop}
A Lie algebra $\g$ admits no deformation retract except from itself. More precisely, if $\g_R\subset\g$ is deformation retract of $\g$, then $\g$ and $\g_R$ are isomorphic as Lie algebras.
\end{prop}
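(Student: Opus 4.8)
The plan is to specialize the deformation-retraction data to the case of Lie algebras and then invoke the algebraic characterization supplied by Example~\ref{ex:homotopy-A_N-Lie-algebras}. A Lie algebra is a Lie algebroid over a point, so a Lie subalgebroid $\g_R\subset\g$ is nothing but a Lie subalgebra, and by Definition~\ref{def.deformationretraction} a deformation retraction from $\g$ to $\g_R$ is a natural homotopy $\Psi=\Phi+\theta dt:\g\times TI\to\g$ with $\Phi_0=\id_\g$ and $\Phi_1=i_{\g_R}\circ\check\Phi_1$, where $\check\Phi_1:\g\to\g_R$ is a retraction. In particular $\Phi_0$ and $\Phi_1$ are naturally homotopic Lie algebra morphisms $\g\to\g$.

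First I would apply Example~\ref{ex:homotopy-A_N-Lie-algebras}, the Lie-algebra specialization of Theorem~\ref{thm:main}. Since both the source and the target equal $\g$, the relevant group is $G$, the simply connected Lie group integrating $\g$, which exists by Lie's third theorem. The existence of a natural homotopy between $\Phi_0$ and $\Phi_1$ then forces the existence of an element $h\in G$ such that $\Phi_1=\text{Ad}_h\circ\Phi_0$. Because $\Phi_0=\id_\g$, this reads simply $\Phi_1=\text{Ad}_h$.

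The decisive observation is then that $\text{Ad}_h$ is a Lie algebra automorphism of $\g$, hence in particular surjective, with inverse $\text{Ad}_{h^{-1}}$. On the other hand, the retraction condition guarantees that $\Phi_1$ factors as $i_{\g_R}\circ\check\Phi_1$, so that $\mathrm{im}\,\Phi_1\subseteq\g_R$. Combining the surjectivity of $\Phi_1$ with this inclusion yields $\g_R=\g$, which is in fact stronger than the claimed isomorphism: the retract coincides with the whole Lie algebra, and the asserted isomorphism is the identity.

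I do not expect any serious obstacle here, as the content is entirely carried by Example~\ref{ex:homotopy-A_N-Lie-algebras}. The only point deserving a word of care is the appeal to integrability needed to apply that example; this is automatic in the finite-dimensional setting via Lie's third theorem and should be flagged explicitly, so that the argument does not silently depend on the integration functor. One could alternatively bypass integration and argue the purely infinitesimal statement that any natural homotopy from $\id_\g$ to a morphism $\Phi_1$ places $\Phi_1$ in the adjoint orbit of $\id_\g$, but since the example is already available the shortest route is to quote it.
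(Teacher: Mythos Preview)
Your proposal is correct and follows exactly the paper's approach: invoke Example~\ref{ex:homotopy-A_N-Lie-algebras} to see that $\Phi_1=\text{Ad}_h$ is an automorphism, hence invertible. You spell out the details more carefully (in particular the appeal to Lie's third theorem and the factorization through $\g_R$), and in doing so you actually obtain the sharper conclusion $\g_R=\g$ rather than a mere isomorphism.
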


\begin{proof}
If $A_M=A_N=g$ is a Lie algebra, then by the Example \ref{ex:homotopy-A_N-Lie-algebras}, $\Phi_1$ is necessarily invertible.
\end{proof}

\begin{thm}\label{thm:retract}
Let $A_M$ be an integrable Lie algebroid, and $\Psi=\Phi+\theta dt:A_M\times TI \to A_M$ be  a deformation retraction of $A_M$ to $A_R$. Then the following assertions hold:
\begin{enumerate}[i)]
\item The Lie algebroid $A_R$ is a integrable.
\item $\G(A_M)$ identifies, as a set theoretical groupoid, with the pullback groupoid $\phi_1^!\G(A_T)$, where $\phi_1$ denotes the restriction of the base map of $\Psi$ to $M\times\{1\}$, seen as a map $\phi_1:M\to R$.
\item if furthermore the base map $\check{\phi}_1:M\to R$ is transversal to the characteristic foliation of $A_R$, that is if: 
$$
\mathrm{Im}( d \check{\phi}_1)_m+\mathrm{Im}(\sharp_{A_R})_{\check{\phi}_1(m)}=T_{\check{\phi}_1(m)}R
$$
 for any $m\in M$, then the identification of item $ii)$ is a Lie groupoid isomorphism, in particular $\G(A_M)$ and $\G(A_R)$ are Morita equivalent.
\end{enumerate} 
\end{thm}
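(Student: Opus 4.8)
The plan is to extract from the deformation retraction an idempotent endomorphism of $\G(A_M)$, to read off the integrability of $A_R$ from its image, and then to use the natural transformation produced by Theorem~\ref{thm:main} to identify $\G(A_M)$ with the pullback groupoid. First I would record the key algebraic fact: since $\check{\Phi}_1\circ i_{A_R}=\id_{A_R}$, the endomorphism $\Phi_1=i_{A_R}\circ\check{\Phi}_1:A_M\to A_M$ is idempotent, $\Phi_1\circ\Phi_1=\Phi_1$. As $A_M$ is integrable, applying the integration functor yields a genuine smooth groupoid morphism $F_1=\G(\Phi_1):\G(A_M)\to\G(A_M)$ which, by functoriality, is again idempotent, $F_1\circ F_1=F_1$; note that at this stage nothing about $\G(A_R)$ is yet needed. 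I would also invoke Theorem~\ref{thm:main} to obtain the smooth natural transformation $\eta:F_0=\id_{\G(A_M)}\Rightarrow F_1$, with components $\eta(x)=[\theta(x)dt]_{A_M}:x\to\check{\phi}_1(x)$.

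For item $i)$ I would use the standard fact that the image (equivalently, the fixed-point set) of a smooth idempotent $F_1$ on a finite-dimensional manifold is a closed embedded submanifold of $\G(A_M)$. Since $F_1$ is a groupoid morphism, its image is moreover a subgroupoid, hence an embedded Lie subgroupoid $\mathrm{Im}(F_1)\subset\G(A_M)$. Its Lie algebroid is $\mathrm{Im}(\Lie(F_1))=\mathrm{Im}(\Phi_1)=i_{A_R}(A_R)\cong A_R$, so $A_R$ is integrable; and since $\G(\check{\Phi}_1)\circ\G(i_{A_R})=\id$ exhibits $\G(i_{A_R})$ as a continuous injection with image $\mathrm{Im}(F_1)$, this subgroupoid is canonically identified with $\G(A_R)$.

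For item $ii)$ I would define $\Xi:\G(A_M)\to\phi_1^!\G(A_R)$ by $\Xi(g):=\bigl(\bs(g),\,\G(\check{\Phi}_1)(g),\,\bt(g)\bigr)$; this lands in the pullback groupoid because $\G(\check{\Phi}_1)(g)$ is an arrow $\phi_1(\bs g)\to\phi_1(\bt g)$, and it is a groupoid morphism since $\G(\check{\Phi}_1)$ is a functor. The inverse is built from the natural transformation: the naturality identity reads $F_1(g)=\eta(\bt g)\cdot g\cdot\eta(\bs g)^{-1}$, which suggests setting $\Xi^{-1}(x,h,y):=\eta(y)^{-1}\cdot\G(i_{A_R})(h)\cdot\eta(x)$, an arrow $x\to y$. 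I would then check the two round trips: for $g:x\to y$ one recovers $\eta(y)^{-1}F_1(g)\eta(x)=g$ directly from naturality, while conversely $F_1(\Xi^{-1}(x,h,y))=\G(i_{A_R})(h)$ forces $\G(\check{\Phi}_1)(\Xi^{-1}(x,h,y))=h$ upon using $F_1=\G(i_{A_R})\circ\G(\check{\Phi}_1)$ together with the injectivity of $\G(i_{A_R})$ (which has left inverse $\G(\check{\Phi}_1)$). This yields the set-theoretic groupoid isomorphism.

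For item $iii)$ I would first observe that the displayed transversality condition is exactly the condition for $\check{\phi}_1:M\to R$ to be transverse to the orbits of $\G(A_R)$, whose tangent spaces are $\mathrm{Im}(\sharp_{A_R})$, which is the standard hypothesis guaranteeing that $\phi_1^!\G(A_R)$ is a Lie groupoid. Both $\Xi$ and $\Xi^{-1}$ are then expressed through the smooth maps $\G(\check{\Phi}_1)$, $\G(i_{A_R})$, the smooth $\eta$, and the smooth groupoid operations of $\G(A_M)$, so $\Xi$ is a diffeomorphism, hence a Lie groupoid isomorphism. Finally, since $\check{\phi}_1$ admits the section $i_R$ it is surjective, so being in addition transverse it is an essential equivalence; the canonical projection $\phi_1^!\G(A_R)\to\G(A_R)$ is thus a Morita map, yielding the asserted Morita equivalence. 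The main obstacle I anticipate is item $i)$: making rigorous that the image of the smooth idempotent $F_1$ is an embedded Lie subgroupoid with Lie algebroid exactly $A_R$ (a constant-rank / idempotent-splitting argument), together with the smoothness bookkeeping for the pullback groupoid in item $iii)$; the algebra of item $ii)$ is then essentially formal once $\eta$ is in hand.
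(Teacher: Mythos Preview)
Your argument for items $ii)$ and $iii)$ is essentially identical to the paper's. The paper defines $K:\G(A_M)\to\check{\phi}_1^!\G(A_R)$ by $[a_Mdt]\mapsto(\bt,\G(\check{\Phi}_1)([a_Mdt]),\bs)$ with proposed inverse $L:(y,h,x)\mapsto\eta_y\cdot[i_{A_R}(h)]\cdot\eta_x^{-1}$; up to the ordering convention for pullback triples these are exactly your $\Xi$ and $\Xi^{-1}$. Your verification that the two composites are identities --- via the naturality identity $F_1(g)=\eta_{\bt g}\cdot g\cdot\eta_{\bs g}^{-1}$ and the injectivity of $\G(i_{A_R})$ coming from its left inverse $\G(\check{\Phi}_1)$ --- is actually more explicit than what the paper writes (it simply asserts that $L$ is inverse to $K$). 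Item $iii)$ is handled the same way in both.

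The genuine difference is item $i)$. The paper dispatches it in one line by citing the Moerdijk--Mr\u{c}un result that any Lie subalgebroid of an integrable Lie algebroid is integrable. Your idempotent-splitting route is a valid and more self-contained alternative, and it yields a bonus: it realises $\G(A_R)$ concretely as the embedded subgroupoid $\mathrm{Fix}(F_1)\subset\G(A_M)$. The point you correctly flag as the crux --- that $\mathrm{Fix}(F_1)$ is an embedded submanifold and a Lie subgroupoid with Lie algebroid $i_{A_R}(A_R)$ --- does go through: along the fixed locus $dF_1$ is a linear idempotent, so its rank equals its trace and is locally constant, giving the submanifold; restricting $F_1$ to a source fibre over $r\in R$ yields a smooth idempotent whose fixed tangent space at $1_r$ is $\mathrm{Im}(\Phi_1)_r=i_{A_R}((A_R)_r)$. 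Your further identification of $\mathrm{Fix}(F_1)$ with the source-simply-connected $\G(A_R)$ is also correct, since $\G(i_{A_R})$ is an injective immersion with the global continuous left inverse $\G(\check{\Phi}_1)$, hence an embedding, and one checks $\mathrm{Im}(\G(i_{A_R}))=\mathrm{Im}(F_1)$ directly from $F_1=\G(i_{A_R})\circ\G(\check{\Phi}_1)$ and $\G(\check{\Phi}_1)\circ\G(i_{A_R})=\id$.
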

\begin{proof}
The item $i)$ is a well known result from Moerdijk and Mr\u{c}un \cite[\S 3.2]{MMrcun}, stating that any Lie subalgebroid of an integrable Lie algebroid is integrable. In order to prove $ii)$, one first checks that, for any Lie algebroid morphism  $\check{\Phi}_1:A_M\to A_R$ covering $\check{\phi}_1:M\to R$, the map:
\begin{align*}
K:\G(A_M)&\longrightarrow\  \check{\phi}_1^!\G(A_R)\\
[a_Mdt] &\longmapsto (\target([a_Mdt]),\check{\Phi}_1([a_M dt]),\source([a_Mdt]))
\end{align*}
always defines a (set) groupoid morphism. Here $[a_Mdt]$ denotes the $A_M$-homotopy class of any $A_M$-path $a_Mdt:TI\to A_M$, and $\check{\Phi}_1:\G(A_M)\to \G(A_R)$ is the Lie groupoid morphism integrating $\check{\Phi}_1:A_M\to A_R$.

One further checks that, if there is a deformation retraction $\Psi=\Phi+\theta dt$ as in Definition  \ref{def.deformationretraction},  then $K$ admits the following map as an inverse: 
\begin{align*}
L:\check{\phi}_1^!\G(A_R)\ &\longrightarrow \ \G(A_M)\\
(y,[a_Rdt],x)&\longmapsto \eta_y\cdot [i_{A_R}(a_R)dt]\cdot (\eta_x)^{-1} 
\end{align*}
Here, $\eta(m):=[\theta(m)dt]\in \G(A_M)$ is the natural transformation as in the equation \eqref{eq:nattranspath} of Theorem \ref{thm:main}.

The transversality condition in $iii)$ ensures that the pullback groupoid $\check{\phi}^!_1\G(A_N)$ is a Lie groupoid, then the isomorphisms $K$ and $L$ defined above are automatically smooth  (see for instance \cite[p. 204]{HM} for pullbacks of Lie groupoids and  algebroids).
\end{proof}

\begin{remark}
Given a Lie algebroid $A_M$, and a Lie subalgebroid $A_R\hookrightarrow A_M$ that is transverse to the characteristic foliation $\mathrm{im}\,\sharp_{A}\subset TM$, there are normal form results (see \cite{BLM, FM1,FM3}) that allow to describe the restriction of $A_M$ to a neighborhood of $R$ as a pullback, namely: $A_M|_\mathcal{U}\simeq p^!\!A_R$, where $p:\mathcal{U}\to R$ is a tubular neighborhood of $R$.

In particular, since  $p$ has contractible fibers, it is easily seen that $p^!A_R$ deformation retracts to $A_R$. In other words, any transversal Lie subalgebroid is a deformation retract of a sufficiently small tubular neighbourhood of his.

Not all deformation retracts are of this form though, and in fact the Theorem \ref{thm:retract} is of a \emph{global} nature. Actually, the fibers of a tubular neighborhood are diffeomorphic to $\RR^n$, while the fibers of a deformation retraction need only be contractible. An explicit example is given by the Whitehead manifold $W$, that retracts to a point, while it is not diffeomorphic to $\RR^3$. As a consequence the Lie algebroid $TW\times T\RR$ deformation retracts to $T\RR$, while it is not of the form $p^!T\RR$ for a tubular neighborhood of $\{0\}\times \RR$. What remains unclear though, is if any deformation retract is transversal.
\end{remark}

\appendix

\section{Lie algebroid maps and time dependence}\label{sec:timedependence}

In this work, we have to deal with various types of time-dependent geometric objects, like sections, vector bundle maps. Below we clarify what is meant by smoothness for such objects.

\begin{defi}\label{def:smoothfamilyofmaps}
	A \emph{smooth family of maps} $\phi_t:M\to N$ parametrized by $t\in I:=[0,1]$ is a smooth map $\phi:M\times I \to N$.
\end{defi}
We define and use similar notations for smooth families of vector bundle maps, of sections, and so on. In particular for forms, supported sections and derivations, we shall use the following notations.

\begin{defi}
Given a vector bundle $A_M\to M$, a \emph{time-dependent ${k}$-form} on ${A_M}$ is a section of the vector bundle $\wedge^k A^*_M\times I$ over $M\times I$.
\end{defi}

 We denote by:
$$\Omega(A_M)^I:=\Omega(A_M\times I).$$
the space of time-dependent forms. In this work, we interpret any $\alpha\in \Omega(A_M)^I$ as a smooth family of sections $\alpha_t\in \Omega(A_M)$, where $\alpha_t$ is obtained as a pullback, namely:
\begin{equation}\label{eq:def:alphat}
\alpha_t:=(i^{A_M}_t)^*(\alpha),
\end{equation}
 where  $i^{A_M}_t:A_M\to A_M\times I$ denotes the obvious injection, and $(i^{A_M}_t)^*:\Omega(A_M)^I\twoheadrightarrow \Omega(A_M)$ the map induced on forms. With these notations, since $(i^{A_M}_t)^*$ is a morphism of graded algebras, we have:
\begin{equation}
(\alpha{}_\wedge\beta)_t=(\alpha_t){}_\wedge (\beta_t).\label{prop:timedependentproduct}
\end{equation}

\begin{defi}
Given a smooth family of maps $\phi_t:M\to N$, and a vector bundle $A_N$ over $N$, a \emph{smooth family $\theta_t$ of sections of $A_N$ supported by $\phi_t$} is, by definition, a section $\theta$ of the pullback bundle $\theta\in \Gamma(\phi^*A_N)$, where $\phi$ is seen as a map $\phi:M\times I\to N$.
\end{defi} 

For a time-dependent vector bundle map, the notion of time-dependent $\Phi^*\!$-derivations is obtained as in the Definition  \ref{def:phiderivation}, with $A_M\times I$ rather than $A_M$. With  our notations, this reads as follows.

\begin{defi}\label{def:phiderivationtimedependent}
Consider  $\Phi_t:A_M\to A_N$ a time-dependent vector bundle map covering $\phi_t:M\to N$,  and denote by $\Phi^*:\Omega(A_N)\to \Omega(A_M)^I$ the induced map on forms.  A $\Phi^*\!$-derivation of degree $k$ is a degree $k$ map $$\cV: \Omega^\bullet(A_N)\to \Omega^{\bullet+k}(A_M)^I$$
 such that:
\begin{align}
\cV(\alpha+\beta)&=\cV(\alpha)+\cV(\beta),\\
\label{eq:phiderivationproperty}
\cV(\alpha_\wedge\beta)
&=\cV(\alpha)_\wedge\Phi^*(\beta)+(-1)^{k.\text{deg}(\alpha)}\Phi^*(\alpha)_\wedge\cV(\beta).
\end{align}
\end{defi}

\begin{remark}\label{rem:timedependentderivation}
Because of the equation \eqref{prop:timedependentproduct}, a $\Phi^*\!$-derivation $\cV$ is the same as a \emph{smooth} family $\cV_t$ of $\Phi_t^*\!$-derivations.

Note however that the terminology of a \emph{smooth family} here is a bit misleading, for both $\Phi^*$ and $\cV$. Indeed, we view for instance $\Phi^*$ as a mere algebraic object, namely a morphism of graded algebras $\Omega(A_N)\to \Omega(A_M)^I:=\Omega(A_M\times I)$. There is no topology involved in this definition, neither on $\Omega(A_N)$ nor on $\Omega(A_M)^I$. Rather, they are the algebraic properties of $\Phi^*$ that insure the smoothness of $\Phi$. In degree $0$ for example, the only condition is that $\phi^*:C^\infty(N)\to C^\infty(M\times I)$ be a morphism of commutative algebras, then it is well known that such morphisms are always pullbacks by a \emph{smooth} map $\phi: M\times I\to N$ (a result which was proved by Bkouche \cite{Bkouche}).

Similar considerations hold for $\cV$, let us spell out the correspondence $\cV \leftrightarrow \cV_t$ in order to avoid confusions. Given $\cV$
we obtain a smooth family $\cV_t$ by setting $\cV_t:=(i^{A_M}_t)^*\circ \cV$. Reciprocally, given a family of aplications $\cV_t:\Omega(A_N)\to \Omega(A_M)$, we set:
\begin{equation}\label{eq:nunut}
\bigl\langle \cV(\alpha_N)_{(m,t)}, a_1{}_\wedge\dots_\wedge a_k\bigr\rangle:=\bigl\langle\cV_t(\alpha_N)_{m}, a_1{}_\wedge\dots_\wedge a_k\bigr\rangle.
\end{equation}  
for any $\alpha_N\in \Omega^k(A_N)$ and $a_1,\dots,a_k\in (A_M\times I)_{(m,t)}$ (the latter being identified with $(A_M)_m $ on the right hand side of \eqref{eq:nunut}). Then, by definition, the family $\cV_t$ is \emph{"smooth"} if $\cV$ \emph{takes values} in $\Omega(A_M)^I$, the space of smooth sections of $\wedge A_M^*\times I\to M\times I$. 
\end{remark}

Assume now that $A_M\to M$ is a Lie algebroid. Then there is, on $A_M\times I$, an obvious structure of a  Lie algebroid over $M\times I$ such that for any $t\in I$, the injections $i^{A}_t:A_M\hookrightarrow A_M\times I$ define  Lie subalgebroids.  The corresponding differential on $\Omega(A_M)^I$ is characterized by:
\begin{align}
\label{prop:timedependentdifferential}
(\d_{A_M\times I}\,\alpha)_t&=\d_{A_M}(\alpha_t),
\end{align}
for any time dependent forms $\alpha,\beta\in \Omega(A)^I$, and any $t\in I$. 

Notice that $\d_{A_M\times I}$ is $C^\infty(I)$-linear, and that the obvious operator:
 $$\ddt:\Omega(A_M)^I \to \Omega(A_M)^I$$
is a degree $0$ derivation of $(\Omega(A_M)^I,{}\wedge)$ that commutes with $\d_{A_M\times I}$.

Likewise, time-dependent forms can be integrated with respect to the time variable, yielding an operator:
\begin{align}\label{integrationoperator}
\int_0^1:\Omega(A_M)^I&\longrightarrow \Omega(A_M)\\
\alpha\quad&\longmapsto \int_0^1\alpha_t dt.
\end{align}
The easiest way to define the above operator is by duality, as follows:
$$\big\langle \int_0^1\alpha_t dt, a_1{}_\wedge\dots {}_\wedge a_n\big\rangle:=\int_0^1 \langle\alpha_t,a_1{}_\wedge\dots{}_\wedge a_n\rangle dt.$$
for any $\alpha \in \Omega^n(A_M)$ and $a_i\in \Gamma(A_M)$, $i=1\dots n$. Here, on the right hand side, one just integrate a function on $M\times I$ with respect to the time variable. The above  formula makes it straightforward to check that such an integration commutes with the Lie algebroid differentials, namely that we have:
\begin{equation}\label{eq:differentialcommuteswithint}
\int_0^1 (\d_{A_M\times I} \alpha)_t dt= \d_{A_M} \int_0^1 \alpha_t dt.
\end{equation}

\begin{lemma}\label{lemm:timedependentliealgmorphisms}
There is a one-to-one correspondence between smooth families of vector bundle maps $\Phi_t:A_M \to A_N$ and morphisms of graded algebras:
$$\Phi^*:(\Omega(A_N), {}\wedge)\to (\Omega(A_M)^I, {}\wedge).$$

Furthermore, $\Phi_t:A_M \to A_N$ defines a smooth family of Lie algebroid morphisms if and only if the induced application
 $\Phi^*:(\Omega(A_N), {}\wedge,\d_{A_N})\to (\Omega(A_M)^I, {}\wedge,\d_{A_M\times I})$ 
 is a morphism of DGAs.
\end{lemma}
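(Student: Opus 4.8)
The plan is to prove the two assertions of Lemma \ref{lemm:timedependentliealgmorphisms} separately, reducing both to the corresponding timeless statements recalled in \S\ref{sec:LiealgasDGAs} via the family formalism of this appendix.

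First I would establish the correspondence between smooth families $\Phi_t:A_M\to A_N$ and morphisms of graded algebras $\Phi^*:\Omega(A_N)\to\Omega(A_M)^I$. Here the key observation is that, by Definition \ref{def:smoothfamilyofmaps}, a smooth family $\Phi_t$ is precisely a single vector bundle map $A_M\times I\to A_N$ covering $\phi:M\times I\to N$, equivalently (as in the proof of Lemma \ref{lem:psidecvectorbundle}) a vector bundle map $A_M\times I\to \phi^*A_N$ covering $\mathrm{id}_{M\times I}$. Applying the timeless result from \S\ref{sec:LiealgasDGAs} to the vector bundle $A_M\times I\to M\times I$, such a map is the same as a morphism of graded algebras $\Omega(A_N)\to\Omega(A_M\times I)=\Omega(A_M)^I$. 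The only point requiring care is that the target here is $\Omega(A_N)$ rather than $\Omega(\phi^*A_N)$; but a morphism out of $\Omega(A_N)$ factors through pullback to $\Omega(\phi^*A_N)$ exactly as in the timeless case, so the bijection descends verbatim. I would also note that under \eqref{eq:def:alphat} this morphism corresponds to the smooth family $\Phi_t^*=(i^{A_M}_t)^*\circ\Phi^*$, consistent with Remark \ref{rem:timedependentderivation}.

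For the second statement, the strategy is to transport the DGA condition through the pointwise-in-$t$ characterizations already at hand. One direction: if each $\Phi_t$ is a Lie algebroid morphism, then $\Phi_t^*\circ d_{A_N}=d_{A_M}\circ\Phi_t^*$ for every $t$ by \S\ref{sec:LiealgasDGAs}; applying \eqref{eq:def:alphat} and the defining property \eqref{prop:timedependentdifferential} of $d_{A_M\times I}$, one checks that $(\Phi^*\circ d_{A_N}-d_{A_M\times I}\circ\Phi^*)$ restricts to zero under every $(i^{A_M}_t)^*$. Conversely, if $\Phi^*$ commutes with the differentials, then pulling back along $i^{A_M}_t$ and again invoking \eqref{prop:timedependentdifferential} together with the fact that $i^{A_M}_t:A_M\hookrightarrow A_M\times I$ is a Lie subalgebroid yields the commutation $\Phi_t^*\circ d_{A_N}=d_{A_M}\circ\Phi_t^*$ for each $t$, whence each $\Phi_t$ is a Lie algebroid morphism. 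The essential lemma powering both directions is that a time-dependent form vanishes if and only if all its time slices vanish, i.e. the family $\{(i^{A_M}_t)^*\}_{t\in I}$ is jointly injective.

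The main obstacle, such as it is, lies not in any single computation but in the bookkeeping of the identifications: making precise that $d_{A_M\times I}$ is genuinely characterized by \eqref{prop:timedependentdifferential} (so that commutation with $d_{A_M\times I}$ is equivalent to timewise commutation with $d_{A_M}$), and that the subalgebroid inclusions $i^{A_M}_t$ induce the correct restriction maps on forms. I expect this to be routine given \eqref{prop:timedependentdifferential} and Remark \ref{rem:timedependentderivation}, which already encode the dictionary between the algebraic object $\Phi^*$ and its smooth family $\Phi_t^*$; the proof is essentially the observation that the timeless equivalence of \S\ref{sec:LiealgasDGAs} holds fiberwise in $t$ and that smoothness of the family is built into the requirement that $\Phi^*$ land in $\Omega(A_M)^I$.
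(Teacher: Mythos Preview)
Your proposal is correct and follows essentially the same approach as the paper: reduce the first assertion to the timeless correspondence of \S\ref{sec:LiealgasDGAs} via the identification of a smooth family with a single vector bundle map $A_M\times I\to A_N$, and deduce the second from \eqref{prop:timedependentproduct} and \eqref{prop:timedependentdifferential}. The paper's own proof is considerably terser---it simply invokes these two ingredients---whereas you spell out the timewise argument and the joint injectivity of the restrictions $(i^{A_M}_t)^*$, but the underlying strategy is the same.
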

\begin{proof}
The first statement essentially follows from the definitions: a \emph{smooth family} of vector bundle maps $\Phi_t:A_M \to A_N$ is, by definition, a vector bundle map  $\Phi:A_M\times I \to A_N$, which is well-known  to be the same as a morphism of graded algebras $\Phi^*:\Omega(A_N)\to \Omega(A_M\times I)=:\Omega(A_M)^I$.

Once smoothness is established, the second statement easily follows from \eqref{prop:timedependentproduct} and \eqref{prop:timedependentdifferential}.
\end{proof}

The Proposition \ref{prop:derivationandtheta} then has its time-dependent counterpart, whose proof is obtained by simply replacing $M$ with $M\times I$, and following the definitions.

\begin{prop}\label{prop:derivationandthetatimedependent}
Let  $\Phi_t:A_M\to A_N$ be a smooth family of vector bundle maps covering $\phi_t:M\to N$.  We denote by $\Phi^*:\Omega(A_N)\to \Omega(A_M)^I$ the induced map on forms.

There is a one-to-one correspondence between:
\begin{itemize}
\item degree $-1$ $\Phi^*\!$-derivations
\item smooth families of time-dependent sections $\theta_t\in\Gamma(\phi_t^*A_N)$ supported by $\phi_t$.
\end{itemize} 
More precisely, to any section supported by $\phi$, $\theta\in\Gamma(\phi^*A_N)$ one associates the $\Phi^ *$-derivation $\cV:=i_{\theta}^{\Phi}$ defined by:
\begin{equation}\label{eq:leibnizopreatordeftimedependent}
\bigl\langle i_{\theta}^{\Phi}\beta\, ,\, a_1{}_\wedge\dots{}_\wedge a_{k-1} \bigr\rangle
:=
\bigl\langle \beta\circ \phi\, ,\, \theta{}_\wedge \Phi\circ  a_1{}_\wedge\dots {}_\wedge  \Phi\circ a_{k-1}\bigr\rangle\in C^\infty(M\times I)
\end{equation}
where $\beta\in \Omega^k(A_N)$ and $a_1,\dots ,a_{k-1}\in \Gamma(A_M)$. Furthermore, any $\Phi^*\!$-derivation is of this form.
\end{prop}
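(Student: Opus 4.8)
The plan is to mimic the proof of Proposition~\ref{prop:derivationandtheta}, with the single base $M$ systematically replaced by the product $M\times I$, so that $C^\infty(M)$ becomes $C^\infty(M\times I)$ and the target of a degree $-1$ derivation lands in $\Omega(A_M)^I$ rather than $\Omega(A_M)$. Throughout I would regard the smooth family $\Phi_t$ as a single vector bundle map $\Phi:A_M\times I\to A_N$ covering $\phi:M\times I\to N$, and a smooth family $\theta_t$ of supported sections as a single section $\theta\in\Gamma(\phi^*A_N)$; this is exactly how both notions are defined in this appendix, so the two ``families'' in the statement are honestly single geometric objects over $M\times I$.

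First I would verify the forward assignment. Given $\theta\in\Gamma(\phi^*A_N)$, formula~\eqref{eq:leibnizopreatordeftimedependent} defines, for each $k$, a map $i_\theta^\Phi:\Omega^k(A_N)\to\Omega^{k-1}(A_M)^I$; its right-hand side is a smooth function on $M\times I$ precisely because $\Phi$, $\phi$ and $\theta$ are smooth over $M\times I$, so $i_\theta^\Phi$ genuinely takes values in $\Omega(A_M)^I$. A direct computation, identical to the one in Proposition~\ref{prop:derivationandtheta} but with scalars in $C^\infty(M\times I)$, shows that $i_\theta^\Phi$ satisfies the graded Leibniz rule~\eqref{eq:phiderivationproperty} of Definition~\ref{def:phiderivationtimedependent} with derivation degree $k=-1$, hence is a degree $-1$ $\Phi^*$-derivation.

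Next I would establish that every degree $-1$ $\Phi^*$-derivation $\cV$ arises this way, and uniquely. Since $\Omega(A_N)$ is generated as a graded algebra by $\Omega^{\leq 1}(A_N)$, the Leibniz rule~\eqref{eq:phiderivationproperty} forces $\cV$ to be determined by its restriction to $\Omega^{\leq 1}(A_N)$. As $\cV$ lowers degree by one, it kills $\Omega^0(A_N)=C^\infty(N)$, so it reduces to a map $\cV|_{\Omega^1(A_N)}:\Omega^1(A_N)\to C^\infty(M\times I)$. Here $\Omega^1(A_N)=\Gamma(A_N^*)$ is viewed as a $C^\infty(M\times I)$-module through the pullback $\phi^*:C^\infty(N)\to C^\infty(M\times I)$, and the Leibniz rule (using that $\Phi^*$ restricts to $\phi^*$ in degree zero and that $\cV$ vanishes there) says exactly that $\cV|_{\Omega^1(A_N)}$ is $C^\infty(M\times I)$-linear. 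Extending scalars, such a map is the same as a $C^\infty(M\times I)$-linear map $\Gamma(\phi^*A_N^*)\to C^\infty(M\times I)$, and by biduality of the finite-rank vector bundle $\phi^*A_N\to M\times I$ this is precisely a section $\theta\in\Gamma(\phi^*A_N)$. Unwinding the identifications, $\cV|_{\Omega^1(A_N)}$ is then $i_\theta^\Phi$ in degree one, and uniqueness of the generating-degree data gives $\cV=i_\theta^\Phi$ in all degrees; the same computation shows $\theta\mapsto i_\theta^\Phi$ is injective.

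I expect the only genuinely nontrivial point to be the module-theoretic biduality step, where the novelty over the time-independent Proposition~\ref{prop:derivationandtheta} is that the module structure on $\Omega^1(A_N)$ is now taken along $\phi^*:C^\infty(N)\to C^\infty(M\times I)$, so that smoothness in the $t$-direction gets automatically encoded in the requirement that $\cV$ land in $\Omega(A_M)^I$. Alternatively, one may bypass the product entirely: by Remark~\ref{rem:timedependentderivation} a degree $-1$ $\Phi^*$-derivation is the same datum as a smooth family $\cV_t$ of degree $-1$ $\Phi_t^*$-derivations, and applying Proposition~\ref{prop:derivationandtheta} at each fixed $t$ yields a family $\theta_t\in\Gamma(\phi_t^*A_N)$; one then checks that smoothness of $\cV_t$ in $t$ corresponds, under~\eqref{eq:leibnizopreatordeftimedependent} at $k=1$, to smoothness of $\theta$ over $M\times I$, which closes the correspondence.
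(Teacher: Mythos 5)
Your proposal is correct and matches the paper's own argument, which is exactly the one-line observation that the proof of Proposition~\ref{prop:derivationandtheta} goes through verbatim upon replacing $M$ by $M\times I$ (so $C^\infty(M)$ by $C^\infty(M\times I)$, with $\Phi$ and $\theta$ viewed as single objects over $M\times I$); you have simply written out the details, including the scalar extension along $\phi^*:C^\infty(N)\to C^\infty(M\times I)$ and the biduality step, at the same level of rigor as the time-independent case. Your alternative timewise route via Remark~\ref{rem:timedependentderivation} is a sound cross-check but not needed.
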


\bibliographystyle{habbrv}

\end{document}